\newcommand{\TheTitle}{More Virtuous Smoothing}
\newcommand{\TheAuthors}{L. Xu, J. Lee, D. Skipper}
\headers{\TheTitle}{\TheAuthors}
\title{{\TheTitle}}
\author{
  Luze Xu\thanks{Department of Industrial and Operations Engineering, University of Michigan, Ann Arbor, MI (\email{xuluze@umich.edu}, \email{jonxlee@umich.edu}).}
  \and
  Jon Lee\footnotemark[1]
  \and
  Daphne Skipper\thanks{Department of Mathematics, U.S. Naval Academy, Annapolis, MD (\email{skipper@usna.edu}).}
}
\crefname{remark}{Remark}{Remarks}
\newcommand{\norm}[1]{\left\Vert#1\right\Vert}
\newcommand{\abs}[1]{\left\vert#1\right\vert}
\newcommand{\hatl}{\hat{\lambda}}
\newcommand{\labitem}[2]{%
\def\@itemlabel{\textbf{#1}}
\item
\def\@currentlabel{#1}\label{#2}}
\begin{document}

\maketitle

% REQUIRED
\begin{abstract}
In the context of global optimization of mixed-integer nonlinear optimization
formulations, we consider smoothing univariate functions $f$ that satisfy $f(0)=0$,
$f$ is increasing and concave on $[0,+\infty)$, $f$ is twice differentiable on
all of $(0,+\infty)$, but $f'(0)$ is undefined or intolerably large.
The canonical examples are  root functions $f(w):=w^p$, for $0<p<1$.
We consider the earlier approach of defining a smoothing function $g$ that
is identical with $f$ on $(\delta,+\infty)$, for some chosen $\delta>0$,  then replacing the part of $f$ on $[0,\delta]$
with the unique homogeneous cubic, matching $f$, $f'$ and $f''$ at
$\delta$. The parameter $\delta$  is used to control (i.e., upper bound) the derivative at 0 (which controls it on all of $[0,+\infty)$ when $g$ is concave).
Our main results: (i) we weaken an earlier sufficient condition to give a necessary and sufficient condition for
the piecewise function $g$ to be increasing and concave;
(ii) we give a  general sufficient condition for $g'(0)$ to be decreasing in the smoothing parameter $\delta$; under the same condition, we
demonstrate that the worst-case error of $g$ as an estimate of $f$ is
increasing in $\delta$;
(iii) we give a general sufficient condition for $g$ to underestimate $f$;
(iv) we give a general sufficient  condition for $g$ to dominate the simple `shift smoothing' $h(w):=f(w+\lambda)-f(\lambda)$ ($\lambda>0$), when the parameters $\delta$ and $\lambda$ are chosen ``fairly'' --- i.e., so that $g'(0)=h'(0)$.
In doing so, we solve two natural open problems of Lee and Skipper (2016),
concerning (iii) and (iv) for root functions.
%  This is an example SIAM \LaTeX\ article. This can be used as a
%  template for new articles.  Abstracts must be able to stand alone
%  and so cannot contain citations to the paper's references,
%  equations, etc.  An abstract must consist of a single paragraph and
%  be concise. Because of online formatting, abstracts must appear as
%  plain as possible. Any equations should be inline.
\end{abstract}

% REQUIRED
\begin{keywords}
global optimization, mixed-integer nonlinear optimization,  spatial branch-and-bound, concave, nondifferentiable, smoothing, piecewise
%  example, \LaTeX
\end{keywords}

% REQUIRED
\begin{AMS}
  90C26, 90C30, 65K05
\end{AMS}

\section{Introduction}

\subsection{Motivation}
Most Mixed-Integer Nonlinear Optimization (MINLO) software, aiming at
global optimization of
 so-called factorable math\-ematical\--optimization formulations,
apply the spatial branch-and-bound algorithm  or some close relative of it (e.g.,
\verb;BARON; \cite{sahinidis},
\verb;ANTIGONE; \cite{misener-floudas:ANTIGONE:2014},   open-source
\verb;Couenne; \cite{Belotti09} and   free-for-academic-use
\verb;SCIP; \cite{Achterberg2009}).
As a first step,  problem functions are ``factored'' (i.e., fully decomposed)
via a small library of low-dimensional nonlinear functions (typically, functions
in one, two or three variables) together with affine functions of an arbitrary number of variables.
It is helpful, for robustness, if the library functions are sufficiently smooth over their domains, i.e., typically twice continuously differentiable, so that typical
nonlinear-optimization algorithms may be reliably applied (e.g., \cite{ipopt}).  For functions that are not already sufficiently smooth, it is standard practice for modelers  to replace ``bad'' functions by smoother approximating functions (e.g., \cite{BDLLT06}, \cite{BDLLT12} and \cite{GMS13}). But the issue can also be grappled with algorithmically by (purely continuous) nonlinear-optimization solvers through parameter setting. For example, A. W\"achter explains (see \cite{ipopttutorial}):

\smallskip
\begin{quote}
\emph{``Problem modification:} {\sc Ipopt} seems to perform better if the feasible set of
the problem has a nonempty relative interior. Therefore, by default, {\sc Ipopt}
relaxes all bounds (including bounds on inequality constraints) by a very
small amount (on the order of $10^{-8}$) before the optimization is started. In
some cases, this can lead to problems, and this features can be disabled by
setting \verb;bound_relax_factor; to 0.''
\end{quote}
\smallskip
Consider $f(w):=\sqrt{w}$ on the domain $[0,+\infty)$.
Notice how in this case {\sc Ipopt}'s default value for this parameter \verb;bound_relax_factor; even function cannot be carried out everywhere on the modified domain $[-10^{-8},+\infty)$. And for the suggested nondefault parameter setting (0),
 $\sqrt{w}$ is not differentiable at 0 (in the actual domain). Techniques like smoothly extending $f$ so that $f(w):=-\sqrt{-w}$ for $w<0$ suffer from still not being differentiable at 0.
 So, we are led back to modeling advice (see \cite{ipopttutorial}):

\smallskip
\begin{quote}
``Therefore, it can be
useful to replace the argument of a function with a limited range of definition
by a variable with appropriate bounds. For example, instead of ``$\log(h(x))$'', use
``$\log(y)$'' with a new variable $y\geq \epsilon$ (with a small constant $\epsilon > 0$) and a new
constraint $h(x) - y = 0$.''
\end{quote}
\smallskip
We note that this kind of advice might be problematic in the context of
integer variables, where \emph{precise zero} may be important in constraints implementing
some logic (e.g., see \cite{DFLV2015},\cite{DFLV2014}), and for this reason,
our study is particularly relevant to MINLO.

Notably, the MINLO software \verb;SCIP; has incorporated features
(see \cite{SCIP32}) to accommodate a more sophisticated approach (see \cite{DFLV2015},\cite{DFLV2014},\cite{lee2016virtuous}),  tackling issues of nonsmoothness while at the same time \emph{working within a paradigm that aims at seeking global optimality for nonconvex problems} (see \cite[\S1]{lee2016virtuous}, for details).
Extending the approach of \emph{virtuous smoothing} from \cite[\S1]{lee2016virtuous} is the subject of what follows.

%Best practices require that these smooth approximating functions have certain properties.  First, and most obviously, we require that an approximator is twice continuously differentiable across its domain.

The practical convergence behavior of
the different nonlinear-opt\-i\-mization algorithms that are relied on
by MINLO solvers is the subject of intense investigation;
 see \cite{MKV2017} for a recent experimental comparison.
As we have indicated, the issue of nonsmoothness pertains to an aspect of the
theoretical and practical behavior of various nonlinear-opt\-i\-mization
solvers employed by MINLO solvers.
Our work is aimed at developing
a mathematical framework for improving the behavior.
But, because our interest is especially in global optimization, we seek some control on how solutions employing smooth approximators
relate to solutions employing the functions that they replace --- for example lower or
upper bounding. In cases where our
approximators lower bound the functions that they replace, we can
compare a pair of lower bounding functions when one dominates the other on its domain.

%Finally, we have the very important but less clearly measurable requirement that an approximator should be very close to the function it is replacing, so that the final solution is affected as little as possible by the substitution.

\subsection{Prior work}

The motivating application for our work is root functions $f(w):= w^p$, with $0 < p < 1$, which are smooth everywhere on their domains $[0,+\infty)$, except at $w = 0$.  The inception of this approach is  from \cite{DFLV2015,DFLV2014}, which grappled with handling square-root functions ($p=1/2$) arising in formulations of the Euclidean Steiner Problem.  That successful approach was to replace the part of the root function on $[0,\delta]$, for some small (but not extremely small) $\delta>0$,
with a homogeneous cubic, matching the function and its first two derivatives at
$\delta$. By construction, the new piecewise function $g$ is twice differentiable on
$(0,+\infty)$.
 The parameter $\delta$ is used to control (i.e., upper bound) the derivative at 0.
\cite{DFLV2015,DFLV2014} showed that the new
piecewise function $g$ is (i) increasing and concave, (ii) underestimates the square root, and (iii) dominates the simple shift smoothing $h(w):=\sqrt{w+\lambda}-\sqrt{\lambda}$, when the parameters $\delta$ (for $g$) and $\lambda$ (for $h$) are chosen ``fairly'' --- i.e., so that
$g'(0)=h'(0)$, and hence both smoothing have the same numerical stability.

 In \cite{lee2016virtuous}, we extended this idea of
 \cite{DFLV2015,DFLV2014}, with the following main results:
 \begin{itemize}
 \item[(i)]  a rather general sufficient condition on $f$ (which includes all root functions and more)   so that our smoothing $g$ is increasing and concave;
 \item[(ii)] for root functions of the form $f(w) = w^{1/q}$, with integer $q \geq 2$, our smoothing $g$ underestimates $f$;
 \item[(iii)]  for root functions of the form $f(w) = w^{1/q}$, with integer $2 \leq  q \leq 10,000$, our smoothing $g$ `fairly dominates' the shift smoothing $h$; i.e., when $g$ and $h$ are chosen so that $g'(0)=h'(0)$.
 \end{itemize}
 Regarding (i), the property is useful because we want $g$ to behave like the function $f$ that it replaces. Furthermore, the concavity of $g$ means
 that controlling its derivative at 0 implies that it is controlled on
 all of $[0,+\infty)$. We are now able to extend (i) to get a necessary and sufficient condition.
  Regarding (ii-iii),
the results requiring that $p$ have the form $1/q$ for an integer $q \geq 2$
(and for (iii) even $q\leq 10,000$, which required some computer algebra  for each $q$)
were limited by the algebraic proof techniques  that we employed --- making a transformation to then be able to apply methods that work for analyzing polynomials (e.g., Descartes' Rule of Signs).
We left in \cite{lee2016virtuous} as substantial open problems extending (ii-iii) to \emph{all} root functions.
In what follows, we resolve these open problems and generalize the theorems
quite a bit further, by employing methods of analysis instead of algebraic methods.

\subsection{Definition of \texorpdfstring{$\delta$-smoothing}{delta-smoothing}}

 Let $f$ be a univariate function having a domain  $I:=[0,U)$, where $U \in \{ w\in\mathbb{R} ~:~ w > 0\} \cup \{+\infty\}$. Suppose that $\delta>0$ is in the domain of $f$.

\begin{jdef}
{\rm We say that such an $f$ satisfies the} minimal $\delta$-smoothing requirements
{\rm if $f(0)=0$, and $f$ is twice differentiable at $\delta$.}
\end{jdef}

 In the spirit of \cite{lee2016virtuous} (though we note that they always assumed $U=+\infty$), we will define a ``$\delta$-smoothing'' of $f$.

\begin{jdef}
{\rm Suppose that such an
 $f$ satisfies the minimal $\delta$-smoothing requirements.
Then the} $\delta$-smoothing {\rm of $f$ is the piecewise-defined function
\[
g(w) := \left\{
\begin{array}{ll}
  g_1 w + \frac{1}{2} g_2 w^2 + \frac{1}{6} g_3 w^3, &  0 \leq w \leq \delta; \\
  f(w), & \delta < w < U,
\end{array}
\right.
\]
with
\begin{displaymath}
\begin{array}{rl}
&g_1:=\displaystyle\frac{3f(\delta)}{\delta}-2f'(\delta)+\frac{\delta f''(\delta)}{2};\\[5pt]
&g_2:=\displaystyle-\frac{6f(\delta)}{\delta^2}+\frac{6f'(\delta)}{\delta}-2f''(\delta);\\[5pt]
&g_3:=\displaystyle \frac{6f(\delta)}{\delta^3}-\frac{6f'(\delta)}{\delta^2}+\frac{3f''(\delta)}{\delta}.
\end{array}
\end{displaymath}
}
\end{jdef}
\bigskip

Obviously the function $g$ and its coefficients $g_1,g_2,g_3$
depend  on $\delta$,
but to keep the notation uncluttered, we do not indicate this in the notation.

Although the coefficients $g_i$ (in the cubic portion of $g$) have a rather complicated specification, it is easy to check that the cubic portion of $g$
 is the unique minimum-degree polynomial having:
\begin{align*}
g(0) &= f(0)~=~0; \\
g(\delta) &= f(\delta); \\
g'(\delta) &= f'(\delta); \\
g''(\delta) &= f''(\delta).
\end{align*}

%The cubic portion of $g$ has several desirable properties.  First, it is easy to see that $g$ is homogeneous and that the coefficients $g_i$ are chosen so that they are the derivatives of $g$ at $0$.  Noting that $g'''$ is constant on the cubic portion of $g$, we have
We chose the precise form of the homogeneous cubic, for later convenience, so that the coefficients $g_i$ satisfy:
\begin{eqnarray*}
g_1 &=& g'(0); \\
g_2 &=& g''(0); \\
g_3 &=& g'''(w), \mbox{ for } w \in [0,\delta].
\end{eqnarray*}

As in \cite{lee2016virtuous}, our main motivation is situations in which, like root functions, $f'(0)$ is undefined or intolerably large. As we will see, the parameter $\delta$ is used to control the derivative of $g$ at $0$.  Also motivated by root functions, we are particularly interested in functions $f$ that are continuous, increasing, and concave on their domains.  %Ideally $g$ mimics the curvature of $f$, since the goal is to approximate $f$ closely.
MINLO solvers like \verb;BARON;, \verb;SCIP; and \verb;ANTIGONE; are improving their performance
by growing their set of low-dimensional library functions,
as a means of getting stronger relaxations. This can lead to stronger relaxations
than simply combining relaxations across function compositions.
So we seek general
methods for smoothing that can be readily applied. Although our first
challenging motivation is root functions, there are other natural functions
that occur for which our methods apply. For example, the concave \emph{entropy function}
\[
f(w) := \left\{
          \begin{array}{ll}
            -w \log(w), & \hbox{$0<w\leq 1$;} \\
            0, & \hbox{w=0}
          \end{array}
        \right.
\]
is continuous on $[0,1]$, but its derivative blows up at 0.
Our methods apply here, and we have plans to implement our smoothing
for it
in \verb;ANTIGONE;. Another example is the concave and increasing \emph{incremental entropy
function}
\[
f(w) := \left\{
          \begin{array}{ll}
            w \log(1+\frac{1}{w}), & \hbox{$w>0$;} \\
            0, & \hbox{w=0.}
          \end{array}
        \right.
\]

To eliminate any chance of confusion, throughout, for a real interval $I$ and a  function $\phi: I \to \mathbb{R}$, $f$ is \emph{increasing} if $\phi(w_1)<\phi(w_2)$ for all $w_1<w_2\in I$,
and \emph{nondecreasing} if $\phi(w_1)\leq \phi(w_2)$ for all $w_1<w_2\in I$.
Similarly for \emph{decreasing} and \emph{nonincreasing}.
The function $\phi$ is \emph{strictly concave} if
$\phi(\lambda w_1 +(1-\lambda)w_2) >
\lambda \phi(w_1) +(1-\lambda)\phi(w_2)$, for all $w_1<w_2\in I$,
and all $0<\lambda <1$, and
\emph{concave} if
$\phi(\lambda w_1 +(1-\lambda)w_2) \geq
\lambda \phi(w_1) +(1-\lambda)\phi(w_2)$, for all $w_1<w_2\in I$,
and all $0<\lambda <1$. Similarly for \emph{strictly convex} and \emph{convex}.

In \S\ref{sec:tendency}, we weaken the sufficient condition \cite[Theorem 2]{lee2016virtuous} to now give a necessary and sufficient condition for $g$ to be increasing and concave.   We also provide conditions under which $g'(0)$ has desirable behaviors.  In \S\ref{sec:lowb}, we give a sufficient condition for $g$ to underestimate $f$, greatly generalizing \cite[Theorem 9]{lee2016virtuous}.
Additionally, in \S\ref{sec:lowb},  we
analyze the dependence on our smoothing parameter $\delta$, of the worst-case  behavior of $g$ as an approximation of $f$.
  In \S\ref{sec:beb}, we give a general sufficient  condition for $g$ to dominate the simple `shift smoothing' $h(w):=f(w+\lambda)-f(\lambda)$ ($\lambda>0$), when the parameters $\delta$ (for $g$)  and $\lambda$
  (for $h$) are chosen ``fairly'' --- i.e., so that $g'(0)=h'(0)$, greatly generalizing \cite[Theorem 10]{lee2016virtuous}.
 Via our  main results in \S\ref{sec:lowb} and  \S\ref{sec:beb},
we solve two natural open problems of  \cite{lee2016virtuous}
concerning root functions, and in fact extend those results
significantly beyond root functions.
  In \S\ref{sec:conclusions}, we make some brief concluding remarks.

%%%%%%%%%%%%%%%%%%%%%%%%%%%%%%%%%%%%%%%%%%%%%%%%%%%%%%%%%%%%%%%%%%%%%%%%%%%%%%

\section{General behaviors of \texorpdfstring{$\delta$-smoothing}{delta-smoothing}}
\label{sec:tendency}
In this section, we explore general properties of $\delta$-smoothings that are not directly related to bounding $f$ (which we will take up in \S\ref{sec:lowb}).  In \S\ref{sec:ic} we provide a necessary and sufficient condition on an increasing and concave $f$ under which its $\delta$-smoothing $g$ is also increasing and concave.  In \S\ref{sec:property}, we provide properties relating the behaviors of $g'$ and $f'$ near zero, when $f'''$ is decreasing.  In \S\ref{monotone}, we show that $f'''$ being decreasing  is a sufficient condition for $g_1 = g'(0)$ to be decreasing in the smoothing parameter $\delta$ --- a property which is practically useful in choosing a good value for $\delta$.

\subsection{Increasing and concave}\label{sec:ic}
In the context of
global optimization, it is desirable for the $\delta$-smoothing $g$ of a function $f$
to share properties with $f$ beyond those inherent in the definition of $g$.
For example, when $f$ is a root function, $f$ is increasing and  concave.
In this way, $g$ can be algorithmically treated by global-optimization software in a way that is consistent
with the treatment of $f$ (e.g., tangents for overestimating and secants for
underestimating). Furthermore, concavity of $g$ implies that controlling
\[
g'(0) = g_1 = \displaystyle\frac{3f(\delta)}{\delta}-2f'(\delta)+\frac{\delta f''(\delta)}{2}
\]  (by choosing $\delta>0$ appropriately) has the effect of
controlling $g'(w)$ on all of its nonnegative domain.

In \cite{lee2016virtuous}, we gave the following lower bound
 on the negative curvature of $f$ at $\delta$ as a sufficient condition for $g$ to be increasing and concave on $[0,\delta]$.

\begin{theorem}\label{thm:ic}(\cite[Theorem 2]{lee2016virtuous})
%Let $\delta>0$ be given.  Consider a function $f$ satisfying the minimal $\delta$-smoothing requirements (\ref{min1}-\ref{min3}) on $[0,+\infty)$,
Let $f$ be a univariate function having a domain  $I:=[0,U)$, where $U \in \{ w\in\mathbb{R} ~:~ w > 0\} \cup \{+\infty\}$. Suppose that $\delta>0$ is in the domain of $f$. Assume that $f$ satisfies the minimal $\delta$-smoothing requirements.
 Suppose further that
\begin{itemize}
\item $f$ is increasing and differentiable on $[\delta, U)$;
\item $f'$ is nonincreasing (resp., decreasing) on $[\delta, U)$.
\end{itemize}
If
\begin{equation}  \label{eq:Tdelta}
f''(\delta) \geq \frac{2}{\delta}\left( f'(\delta)-\frac{f(\delta)}{\delta}\right)
~~\big( \Leftrightarrow g_3 \geq 0 ~\big),
\tag{\text{$T_\delta$}}
\end{equation}
then the $\delta$-smoothing $g$ of $f$  is increasing and concave (strictly concave) on  $[0,U)$.
\end{theorem}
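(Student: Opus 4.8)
The plan is to reduce everything to the sign and monotonicity of $g'$, exploiting that by construction $g$ is twice differentiable at the junction $\delta$ (the cubic matches $f$, $f'$, $f''$ there). On the right piece $(\delta,U)$ there is nothing to do: $g\equiv f$, so $g'=f'$ inherits exactly the desired behavior from the hypotheses. All of the real work is on the cubic piece $[0,\delta]$, where $g'(w)=g_1+g_2 w+\tfrac12 g_3 w^2$, $g''(w)=g_2+g_3 w$, and $g'''(w)\equiv g_3$. Note also the preliminary fact that $f'>0$ on $[\delta,U)$: if $f'(w_0)=0$ for some $w_0\in[\delta,U)$, then $f'$ nonincreasing forces $f'\leq 0$ on $[w_0,U)$, contradicting that $f$ is increasing; in particular $f'(\delta)>0$.

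First I would establish concavity on $[0,\delta]$. The key observation --- and the reason $(T_\delta)$ is exactly the right hypothesis --- is that $g''$ is \emph{affine} in $w$ with slope $g_3$, and $(T_\delta)$ is precisely the statement $g_3\geq 0$. Hence $g''$ is nondecreasing on $[0,\delta]$, so it attains its maximum at the right endpoint, where $g''(\delta)=f''(\delta)$ by the matching conditions. Since $f'$ is nonincreasing on $[\delta,U)$ and $f''(\delta)$ exists, the one-sided difference quotients $\big(f'(\delta+h)-f'(\delta)\big)/h\leq 0$ for small $h>0$ force $f''(\delta)\leq 0$. Therefore $g''(w)\leq g''(\delta)=f''(\delta)\leq 0$ throughout $[0,\delta]$, so $g'$ is nonincreasing there.

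Next I would glue the pieces. Because $g$ matches $f'$ at $\delta$, $g'$ is continuous at $\delta$; being nonincreasing on $[0,\delta]$ and on $[\delta,U)$ with a common value $g'(\delta)=f'(\delta)$, it is nonincreasing on all of $[0,U)$ (for $w_1<\delta<w_2$ one has $g'(w_1)\geq g'(\delta)\geq g'(w_2)$), which is concavity of $g$. Monotonicity then comes for free: on $[0,\delta]$, nonincreasingness of $g'$ gives $g'(w)\geq g'(\delta)=f'(\delta)>0$, and on $(\delta,U)$ we have $g'=f'>0$; hence $g'>0$ everywhere and $g$ is increasing.

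Finally, for the strict conclusions under ``$f'$ decreasing,'' the right piece is strictly concave immediately. On $[0,\delta]$, if $g_3>0$ then $g''(w)<g''(\delta)=f''(\delta)\leq 0$ for $w<\delta$, so $g'$ is strictly decreasing and the cubic is strictly concave; together with the strictly decreasing $f'$ on the right and continuity of $g'$, this makes $g'$ strictly decreasing on $[0,U)$, i.e. $g$ strictly concave. The one point needing care --- and the main obstacle to a blanket strict claim --- is the degenerate boundary case $g_3=0$ together with $f''(\delta)=0$, in which the cubic collapses to the secant $w\mapsto (f(\delta)/\delta)\,w$ and $g$ is merely affine on $[0,\delta]$. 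I would therefore isolate this degeneracy and verify it is excluded under the operative strict hypothesis (equivalently, that strict $(T_\delta)$ or $f''(\delta)<0$ holds) before asserting strict concavity on all of $[0,U)$.
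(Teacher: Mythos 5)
Your argument for the non-strict claim is correct and is essentially the paper's own. The paper never actually proves this theorem (it is quoted from earlier work), but its proof of the generalization \cref{thm:icexact} is exactly your computation: $g''$ is linear on $[0,\delta]$, its values at the endpoints are nonpositive, hence $g''\le 0$ throughout, so $g'$ is nonincreasing down to $g'(\delta)=f'(\delta)>0$, and the two pieces glue at $\delta$. Indeed, your step ``slope $g_3\ge 0$ plus $g''(\delta)=f''(\delta)\le 0$'' is precisely why \eqref{eq:Tdelta} implies the exact condition \eqref{eq:TSdelta}: one has $g_2=f''(\delta)-\delta g_3\le 0$. Your preliminary fact $f'(\delta)>0$ and your derivation of $f''(\delta)\le 0$ from one-sided difference quotients are also correct, and are more careful than anything the paper writes down.

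The strict case is where the substance lies, and the degeneracy you flag is a genuine problem --- but the verification you defer (``check that $g_3=0$ together with $f''(\delta)=0$ is excluded by the hypotheses'') cannot be carried out, because the hypotheses do not exclude it. Take $\delta=1$, $U=2$, and $f(w)=w$ on $[0,1]$, $f(w)=w-\frac{1}{4}(w-1)^4$ on $(1,2)$. Then $f(0)=0$; $f$ is twice differentiable at $\delta$ with $f(1)=f'(1)=1$ and $f''(1)=0$; $f$ is increasing and $f'(w)=1-(w-1)^3$ is strictly decreasing on $[1,2)$; and \eqref{eq:Tdelta} holds with equality, i.e.\ $g_3=0$. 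But then $g_2=0$ as well, so $g(w)=w$ on $[0,1]$: $g$ is affine there, hence not strictly concave on $[0,U)$. (Since $g$ on $[0,\delta]$ depends only on the data $f(\delta),f'(\delta),f''(\delta)$, the linear piece of $f$ on $(0,1)$ can be replaced by something nonsmooth at $0$ with the same data at $\delta$, so the example is not degenerate in the sense of \cref{ex:AB}.) Thus the strict half of the theorem, as literally quoted, fails in this edge case. Note that the paper's own proof of \cref{thm:icexact} slides over exactly this point: it asserts that $f''(\delta)$ is ``negative'' when $f'$ is decreasing, which does not follow --- the right difference quotients of $f'$ at $\delta$ are negative, but their limit can be $0$. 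So your proof is the best possible: concavity and monotonicity always, and strict concavity under the additional (and necessary) nondegeneracy assumption that $g_3>0$ or $f''(\delta)<0$.
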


If we make the further mild assumption that $f$ is differentiable on $(0,\delta)$, then by Rolle's Theorem,
there is a $u\in(0,\delta)$ so that
\[
\frac{f(\delta)-f(0)}{\delta-0} = f'(u).
\]
If we make the still further mild assumption that $f'$ in nonincreasing on $(0,\delta]$, then we can conclude that
$f'(\delta)-\frac{f(\delta)}{\delta}\leq 0$.
Then the intuition for \eqref{eq:Tdelta} is that if there is not too much negative curvature of $f$ at $\delta$,
 then the function can make it to the origin staying increasing and concave.

%\begin{theorem}\label{thm:ic}(\cite[Theorem 2]{lee2016virtuous})
%Let $\delta>0$ be given.
%On $[\delta,+\infty)$, let $f$ be increasing and differentiable, with $f'$ nonincreasing (resp., decreasing).
%Let $f(0)=0$, and let $f$ be twice differentiable at $\delta$.
%If
%\begin{equation} % \label{eq:Tdelta}
%f''(\delta) \geq \frac{2}{\delta}\left( f'(\delta)-\frac{f(\delta)}{\delta}\right)
%~~\big( \Leftrightarrow g_3 \geq 0 ~\big),
%\tag{\text{$T_\delta$}}
%\end{equation}
%then $g$ is increasing and concave (strictly concave) on  $[0,+\infty)$.
%\end{theorem}

%\begin{theorem}\label{thm:ic}(\cite[Theorem 2]{lee2016virtuous})
%Let $\delta>0$ be given.
%On $[\delta,+\infty)$, let $f$ be increasing and differentiable, with $f'$ nonincreasing (resp., decreasing).
%Let $f(0)=0$, and let $f$ be twice differentiable at $\delta$.
%If
%\begin{equation}  \label{eq:Tdelta}
%f''(\delta) \geq \frac{2}{\delta}\left( f'(\delta)-\frac{f(\delta)}{\delta}\right)
%~~\big( \Leftrightarrow g_3 \geq 0 ~\big),
%\tag{\text{$T_\delta$}}
%\end{equation}
%then $g$ is increasing and concave (strictly concave) on  $[0,+\infty)$.
%\end{theorem}

This sufficient condition is met by all root functions and more (see \cite[Examples 6,7]{lee2016virtuous}).
Of course we may be concerned that the sufficient condition \eqref{eq:Tdelta} is too strong, and
the following example demonstrates that \eqref{eq:Tdelta} is not necessary for $g$ to be increasing and concave.
\noindent
\begin{example}
\label{ex:AB}
For $\epsilon>0$, let
\begin{displaymath}
f(w) := \left \{
\begin{array}{ll}
\displaystyle-\frac{1}{24\epsilon\sqrt{\epsilon}(1+\epsilon)}w^3+\frac{1+5\epsilon}{8\epsilon\sqrt{\epsilon}}w, &0\leq w\le 1+\epsilon;\\
\sqrt{w-1} + \displaystyle \frac{1+8\epsilon-5\epsilon^2}{12\epsilon\sqrt{\epsilon}},& w > 1+\epsilon.
\end{array}
\right .
\end{displaymath}
It is straightforward to verify that $f(0)=0$,  and that $f$ is twice-differentiable, increasing, and concave on $[0,+\infty)$.

Now, let $\delta:=1$. Because $f$ is a cubic function on $[0,\delta]$, $g$ is the same as $f$ on $[0,\delta]$, which means that $g$ is also increasing and concave. However, in this case, $g'''(\delta)=\displaystyle -\frac{1}{4\epsilon\sqrt{\epsilon}(1+\epsilon)}<0$, contradicting \eqref{eq:Tdelta}.

Finally, one could argue that this example is unfair, because we are not actually smoothing anything at 0. But, following the idea in \cite[Section 2.2]{lee2016virtuous}, we could add a very small positive multiple of $\sqrt{w}$ to this $f(w)$,
and then we would get a legitimate example, nonsmooth at 0.\hfill  $\Diamond$
\end{example}
%\end{proof}

Next, we will see that by ``weakening the condition  \eqref{eq:Tdelta} by 50\%'' (see the paragraph after  \eqref{eq:Tdelta}),
we obtain a necessary and sufficient condition for $g$ to be increasing and concave on $[0,\delta]$.  In fact, we will see that the condition is precisely motivated by \cref{ex:AB}.

% {\color{red} Check red below}
\begin{theorem}\label{thm:icexact}
Let $f$ be a univariate function having a domain  $I:=[0,U)$, where $U \in \{ w\in\mathbb{R} ~:~ w > 0\} \cup \{+\infty\}$. Suppose that $\delta>0$ is in the domain of $f$. Assume that $f$ satisfies the minimal $\delta$-smoothing requirements.
 Suppose further that
\begin{itemize}
\item $f$ is increasing and differentiable on $[\delta, U)$;
\item $f'$ is nonincreasing (resp., decreasing) on $[\delta, U)$.
% {\color{red} In Thm 3, we have two cases: nonincreasing/decreaing}
\end{itemize}
Then $g$ is increasing and concave (strictly concave)
% {\color{red} In Thm 3, we have two cases: concave/strictly concave}
on  $[0,U)$ if and only if
\begin{equation}  \label{eq:TSdelta}
f''(\delta) \geq \frac{3}{\delta}\left( f'(\delta)-\frac{f(\delta)}{\delta}\right)
~~\big( \Leftrightarrow g_2 \leq 0 ~\big).
\tag{\text{$T^*_\delta$}}
\end{equation}
\end{theorem}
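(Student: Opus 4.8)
The plan is to treat $g$ piece by piece---the homogeneous cubic on $[0,\delta]$ and $g\equiv f$ on $[\delta,U)$---and then glue at $\delta$ using the built-in matching $g(\delta)=f(\delta)$, $g'(\delta)=f'(\delta)$, $g''(\delta)=f''(\delta)$. First I would record the equivalence already displayed in \eqref{eq:TSdelta}: from the formula for $g_2$, condition \eqref{eq:TSdelta} is exactly $g_2\le 0$. The one structural fact that drives everything is that on $[0,\delta]$ we have $g''(w)=g_2+g_3 w$, an affine function of $w$ with endpoint values $g''(0)=g_2$ and $g''(\delta)=g_2+g_3\delta=f''(\delta)$. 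Hence $g''\le 0$ on the whole interval $[0,\delta]$ if and only if $g''\le 0$ at the two endpoints. This is precisely the weakening of \eqref{eq:Tdelta} anticipated in the discussion after that display: \eqref{eq:Tdelta} forces $g_3\ge 0$, making $g''$ nondecreasing so that concavity is governed by the right endpoint, whereas in fact the right endpoint $g''(\delta)=f''(\delta)$ will be nonpositive for free, leaving only the left endpoint $g''(0)=g_2$ to control.

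Next I would extract two facts from the hypotheses on $[\delta,U)$. Since $f'$ is nonincreasing there and $f''(\delta)$ exists, the right-hand difference quotients of $f'$ at $\delta$ are $\le 0$, so that $f''(\delta)$, being their limit, is $\le 0$; thus $g''(\delta)=f''(\delta)\le 0$ automatically. And $f'(\delta)>0$: were $f'(\delta)\le 0$, then $f'\le f'(\delta)\le 0$ on $[\delta,U)$ (as $f'$ is nonincreasing), forcing $f$ nonincreasing and contradicting that $f$ is increasing on $[\delta,U)$.

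Necessity is then immediate and uses only the construction: if $g$ is concave on $[0,U)$ it is concave on $[0,\delta]$, where it is a smooth cubic, so $g''\le 0$ on $(0,\delta)$ and, by continuity, $g''(0)=g_2\le 0$. For sufficiency, assume $g_2\le 0$. The affine observation together with $g''(\delta)=f''(\delta)\le 0$ gives $g''\le 0$ on $[0,\delta]$, so the cubic piece is concave; $f$ is concave on $[\delta,U)$ because $f'$ is nonincreasing; and since $g$ is $C^1$ at $\delta$, its derivative $g'$ is nonincreasing on each piece and continuous at the junction, hence nonincreasing on $[0,U)$, so $g$ is concave on $[0,U)$. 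For monotonicity, concavity of the cubic makes $g'$ nonincreasing on $[0,\delta]$, so $g'\ge g'(\delta)=f'(\delta)>0$ there and the cubic is strictly increasing; with $f$ increasing on $[\delta,U)$ and continuity at $\delta$, $g$ is increasing on $[0,U)$. The parenthetical strict version is the same argument with ``nonincreasing/$\le$'' replaced by ``decreasing/$<$'': $f$ is strictly concave on $[\delta,U)$, and the affine $g''=g_2+g_3 w$ is $\le 0$ and not identically zero, so $g'$ is strictly decreasing on $[0,\delta]$, and gluing yields $g'$ strictly decreasing, i.e.\ $g$ strictly concave.

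The genuinely new ingredient is the affine-$g''$ reduction, which is exactly what \cref{ex:AB} signals: there the concave cubic has $g_3<0$ (so \eqref{eq:Tdelta} fails) while $g''(0)=g_2\le 0$, pointing to the left endpoint as the quantity to control. I do not expect a hard estimate anywhere; the main care is in the gluing step---arguing that concavity and monotonicity of the two pieces together with $C^1$-matching at $\delta$ give the global conclusion---and in the one degenerate boundary case of the strict statement, namely $g_2=g_3=0$, where the cubic collapses to the line $g_1 w$ (forcing simultaneously $f''(\delta)=0$ and $f'(\delta)=f(\delta)/\delta$) and $g$ is concave but not strictly so.
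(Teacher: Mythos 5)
Your proof is correct and follows essentially the same route as the paper's: necessity from $g''(0)=g_2$, and sufficiency from the affine form of $g''$ on $[0,\delta]$ being nonpositive at both endpoints, so that $g'$ is nonincreasing down to $g'(\delta)=f'(\delta)>0$, followed by gluing with $f$ on $[\delta,U)$. Your added care is in fact an improvement in one spot: the paper's proof asserts $g''(\delta)=f''(\delta)$ is negative in the strict case, which does not follow merely from $f'$ being decreasing, and your degenerate case $g_2=g_3=0$ (so $g''\equiv 0$ and the cubic collapses to a line) is exactly where that assertion, and the strict conclusion, can fail.
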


%\begin{theorem}%\label{thm:icexact}
%Let $\delta>0$ be given.
%On $[\delta,+\infty)$, let $f$ be increasing and differentiable, with $f'$ decreasing.  Let $f(0)=0$, and let $f$ be twice differentiable at $\delta$.
%Then $g$ is increasing and concave on  $[0,+\infty)$ if and only if
%\begin{equation}  %t\label{eq:TSdelta}
%f''(\delta) \geq \frac{3}{\delta}\left( f'(\delta)-\frac{f(\delta)}{\delta}\right)
%~~\big( \Leftrightarrow g_2 \leq 0 ~\big),
%\tag{\text{$T^*_\delta$}}
%\end{equation}
%\end{theorem}
\begin{proof}
Necessity is obvious because $g''(0)=g_2\leq 0$. For  sufficiency, under \eqref{eq:TSdelta}, we have $g''(0)\leq 0$. Along with $g''(\delta)=f''(\delta)$ is nonpositive (negative) and that $g''(w)$ is linear (in $w$),
we have $g'(w)$ is nonincreasing (decreasing) to $g'(\delta)=f'(\delta)>0$, therefore $g$ is concave (strictly concave) and increasing on $[0,\delta]$. Note that the assumptions on $f$ imply that $g$ is concave (strictly concave) and increasing on $[\delta,U)$, the conclusion follows.
% we have $g''(w)\leq 0$ on $[0,\delta]$.
\end{proof}

As the function in \cref{ex:AB}  has $g_2=0$,
it satisfies property \eqref{eq:TSdelta} as an equation.

\subsection{Controlled derivative at 0}\label{sec:property}
The primary goal of $\delta$-smoothing is to approximate $f$ by  a smooth function $g$ having derivative controlled at zero.  In \cref{prop:thm1}, we present several properties relating the behaviors of derivatives of $f$ and $g$ at the ends of the interval $[0,\delta]$ in the event that $f'''$ exists and is decreasing on   $(0,\delta]$.  As we will see, for increasing and concave $f$, we get conditions under which both the first and second derivatives of $g$ are more controlled near zero than those of $f$.  Looking ahead, \cref{prop:thm1} will also be used in \S\ref{monotone} and \S\ref{fgmonotone} to prove the monotonicity of $g_1$ and $\norm{f-g}_{\infty}$ in the smoothing parameter $\delta$, and in \S\ref{sec:beb} to demonstrate that $g$ is a tighter lower bound for ``root-like functions'' than the natural ``shift smoothing''.
 %These properties are useful in the following generalization for other possibilities and in the proof of the better bound.
\begin{proposition}
\label{prop:thm1}
Let $f$ be a univariate function having a domain  $I:=[0,U)$, where $U \in \{ w\in\mathbb{R} ~:~ w > 0\} \cup \{+\infty\}$. Suppose that $\delta>0$ is in the domain of $f$. Assume that $f$ satisfies the minimal $\delta$-smoothing requirements.
 Suppose further that
\begin{itemize}
	\item $f$ is continuous on $[0,\delta]$ and thrice differentiable on $(0,\delta]$,
	\item $f'''$ is decreasing on $(0,\delta]$.
\end{itemize}
Then $f$ has the following properties:
\smallskip
\begin{enumerate}[label=(\arabic*),leftmargin=5\parindent,itemsep=1ex]
\item \label{p1} $\displaystyle\lim_{w\to 0^+}{f'(w)}> g_1 = g'(0)$;
\item \label{p2} $\displaystyle\lim_{w\to 0^+}{f''(w)}< g_2 = g''(0)$;
\item \label{p3} $\displaystyle\lim_{w\to 0^+}{f'''(w)}> g_3 = g'''(0)$;
\item \label{p4} $f'''(\delta)<g_3$.
\end{enumerate}
\end{proposition}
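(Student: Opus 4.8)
The plan is to study the single auxiliary function $\phi := f - p$ on $[0,\delta]$, where $p(w):=g_1 w + \tfrac12 g_2 w^2 + \tfrac16 g_3 w^3$ is the cubic piece of $g$. The matching conditions defining $g$ give $\phi(\delta)=\phi'(\delta)=\phi''(\delta)=0$, and --- crucially --- the \emph{homogeneity} of the cubic (no constant term) together with $f(0)=0$ gives a fourth vanishing condition, $\phi(0)=0$. It is this extra zero at the left endpoint that drives everything. Applying Rolle's Theorem to $\phi$ on $[0,\delta]$, then to $\phi'$ on $[c_1,\delta]$, then to $\phi''$ on $[c_2,\delta]$, I would produce points $0<c_1<c_2<c_3<\delta$ with $\phi'(c_1)=\phi''(c_2)=\phi'''(c_3)=0$; the thrice-differentiability on $(0,\delta]$ and twice-differentiability at $\delta$ make each application legitimate.

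Next I would exploit the hypothesis that $f'''$ is decreasing, i.e. that $\phi'''=f'''-g_3$ is (strictly) decreasing on $(0,\delta]$. Since $\phi'''(c_3)=0$, this forces $\phi'''>0$ on $(0,c_3)$ and $\phi'''<0$ on $(c_3,\delta]$. Evaluating at $\delta$ gives $f'''(\delta)<g_3$, which is property (4). The sign of $\phi'''$ then cascades down: $\phi'''>0$ on $(0,c_3)$ makes $\phi''$ strictly increasing there, and combined with $\phi''(c_2)=0$ (where $c_2<c_3$) this forces $\phi''<0$ on $(0,c_2)$; in turn $\phi''<0$ makes $\phi'$ strictly decreasing on $(0,c_2)$, and combined with $\phi'(c_1)=0$ this forces $\phi'>0$ on $(0,c_1)$. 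These sign facts near $0$ are exactly what properties (1) and (2) require.

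To finish, each of $\phi',\phi'',\phi'''$ is monotone on a right-neighborhood of $0$, so its one-sided limit as $w\to 0^+$ exists in the extended reals and, for a monotone function, equals the corresponding supremum (or infimum) over that neighborhood; comparing with any interior value (e.g. $\phi'(w_0)$ for $0<w_0<c_1$) upgrades the strict pointwise inequalities into strict limit inequalities. Translating back through the continuity of $p$ and its derivatives at $0$, with $p'(0)=g_1$, $p''(0)=g_2$, $p'''(0)=g_3$, yields (1), (2), (3). I expect the only real subtlety --- the ``main obstacle'' --- to be the rigorous handling of these one-sided limits, since for the motivating root functions $f',f'',f'''$ blow up at $0$; the monotonicity of the $\phi$-derivatives near $0$ is precisely what guarantees the limits exist and lets me avoid any appeal to differentiability of $f$ at the endpoint $0$.
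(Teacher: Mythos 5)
Your proof is correct, but it takes a genuinely different route from the paper's. The paper works with the same difference function $F := f-g$ on $[0,\delta]$ and the same cascading-monotonicity logic, but its argument is by contradiction: for each property in turn, it assumes the property fails, propagates the resulting sign information down through $F''$, $F'$, $F$ to conclude $F \equiv 0$, and then contradicts $f \neq g$ (which holds because $f'''$ is strictly decreasing while $g'''$ is constant); the later properties reuse the earlier ones. You instead give a direct proof: the extra zero $\phi(0)=0$ together with the three matching conditions at $\delta$ feeds three successive applications of Rolle's Theorem, producing $0<c_1<c_2<c_3<\delta$ with $\phi'(c_1)=\phi''(c_2)=\phi'''(c_3)=0$; strict monotonicity of $\phi'''$ then pins down the sign pattern, property (4) falls out by evaluating at $\delta$, and properties (1)--(3) follow from monotone one-sided limits at $0$. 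Notably, your technique is essentially the one the paper itself uses to prove \cref{thm:1}, its lower-bounding theorem. What your route buys: it is constructive --- the interior zeros and the resulting sign pattern of $F'$, $F''$, $F'''$ are exactly the data the paper later has to re-extract in \cref{rk:w} --- and it treats the existence of the one-sided limits explicitly via monotonicity, a point the paper glosses over when it phrases the negation of property (1) as ``$\lim_{w\to 0^+}F'(w)\leq 0$'', which tacitly presumes the limit exists. What the paper's route buys: a single uniform contradiction template for all four properties, with no Rolle bookkeeping and no need to track the ordering of the critical points.
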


%\label{prop:thm1}
%Let $\delta>0$ be given. In addition to the minimal $\delta$-smoothing requirements (\ref{min1}-\ref{min3}) on $f$, assume also that
%\begin{itemize}
%	\item $f$ is continuous on $[0,\delta]$;
%	\item $f$ is thrice differentiable on $(0,\delta]$;
%	\item $f'''$ is decreasing on $(0,\delta]$.
%\end{itemize}
%Then either $f=g$, or $f$ has the following properties:
%\smallskip
%\begin{enumerate}[label=(\arabic*),leftmargin=5\parindent,itemsep=1ex]
%\item \label{p1} $\displaystyle\lim_{w\to 0^+}{f'(w)}> g_1 = g'(0)$;
%\item \label{p2} $\displaystyle\lim_{w\to 0^+}{f''(w)}< g_2 = g''(0)$;
%\item \label{p3} $\displaystyle\lim_{w\to 0^+}{f'''(w)}> g_3 = g'''(0)$;
%\item \label{p4} $f'''(\delta)<g_3$.
%\end{enumerate}
%\end{proposition}
\begin{proof}
Clearly $f\not=g$ because $f'''$ is decreasing on $[0,\delta]$ while $g'''$ is constant on $[0,\delta]$.
Define $F:=f-g$ on $[0,\delta]$, and let $J := (0,\delta)$.  Then $F(0)=0$ and $F^{(i)}(\delta)=0$ for $i=0,1,2$.    Because $f'''$ is decreasing on $(0,\delta]$, $F''' = f'''-g_3$ is also decreasing on $(0,\delta]$.

Suppose property \ref{p4} does not hold, i.e., $F'''(\delta)\ge0$. Then on $J$, $F''' > 0$ or equivalently, $F''$ is increasing. Because $F''(\delta)=0$, $F'' < F''(\delta)=0$ and $F'$ is decreasing on $J$. Because $F'(\delta)=0$, $F' > F'(\delta)=0$ and $F$ is increasing on $J$. Noting that $F(0)=F(\delta)=0$, we have $F \equiv 0$; i.e., $f=g$.

Suppose property \ref{p3} does not hold, i.e., $\lim\limits_{w\rightarrow0^+}F'''(w)\le0$, so that $F''' \le \lim\limits_{w\rightarrow0^+}F'''(w)\le0$ on $J$. Following a similar argument as above, on interval $J$, $F''$ is decreasing, $F'$ is increasing, and $F$ is decreasing.  Again we arrive at the trivial case: $f = g$.

Suppose property \ref{p2} does not hold, i.e., $\lim\limits_{w\rightarrow0^+}F''(w)\ge0$.  From properties \ref{p3} and \ref{p4}, we know that $F''$ is first increasing and then decreasing on $J$. Thus, $F'' \ge 0$ on $J$. As above, we find that $F'$ is increasing and $F$ is decreasing on $J$, leading again to the trivial case.

Suppose property \ref{p1} does not hold, i.e., $\lim\limits_{w\rightarrow0^+}F'(w)\le0$.
Property \ref{p2}, along with the facts that $F''$ is first increasing and then decreasing on $J$ and $F''(\delta)=0$, implies that $F'$ is first decreasing and then increasing on $J$. Therefore, $F' \le 0$, and $F$ is nonincreasing on $J$, so that $f = g$.
\end{proof}

When $f$ is increasing and concave and $g_2\le0$, $g$ is increasing and concave by \cref{thm:icexact}. In this case, property \ref{p1} implies that $g'$ is more controlled near $0$ than $f'$, and property \ref{p2} implies that $-g''$ is more controlled near $0$ than $-f''$. Of course, via $\delta$ we have control over both $g'(0)$ and $-g''(0)$.

\subsection{Monotonicity of \texorpdfstring{$g_1=g'(0)$ in $\delta$}{g'(0) in delta}}\label{monotone}

For a particular increasing and concave $f$, it may seem intuitive that $g_1=g'(0)$ should be  decreasing in the smoothing parameter $\delta$, for $\delta>0$ in the domain of $f$. This would be a very useful property, because then we could easily find a value for $\delta$ to achieve a target value for
$g_1$ using a simple univariate search.  As we explore the tendency of $g_1$ with respect to $\delta$, it is useful to emphasize the functional dependence of  $g_1$ on $\delta$ by writing  $g_1(\delta)$.

It is straightforward to calculate the derivative of this function:
\begin{equation*}
\frac{d g_1(\delta)}{d\delta}
~=~
- \frac{3}{\delta^2} f(\delta)
+\frac{3}{\delta} f'(\delta)
-\frac{3}{2}f''(\delta)
%-2f''(\delta)
%+ \frac{f''(\delta)}{2}
+ \frac{\delta}{2} f'''(\delta).
\end{equation*}
Unfortunately, for concrete functions $f$, it may not be so practical to check that this derivative is nonpositive  for $\delta>0$ in the domain of $f$.
So, to establish such monotonicity in a \emph{practically verifiable manner}, we need to make some appropriate hypotheses.

\begin{theorem}\label{thm:monotone}
Let $f$ be a univariate function having a domain  $I:=[0,U)$, where $U \in \{ w\in\mathbb{R} ~:~ w > 0\} \cup \{+\infty\}$.  Assume that $f$ satisfies the minimal $\delta$-smoothing requirements for all $\delta>0$ in the domain of $f$.
Suppose further that

\begin{itemize}
  \item $f$ is continuous on $[0,U)$ and thrice differentiable on $(0,U)$;
  \item $f'''$ is decreasing on $(0,U)$.
\end{itemize}
% \begin{itemize}
% \item $f$ is four-times differentiable on $(0, U)$,
% \item  $f^{(4)}(w)\le 0$ for  $w \in (0,U)$,
% \item $\displaystyle \lim_{\delta \to 0^+} D(\delta) \leq 0$, where $D(\delta):= \delta^3(f'''(\delta)-g'''(0))$.
% \end{itemize}
Then $g_1(\delta)$ is decreasing on $(0,U)$.
\end{theorem}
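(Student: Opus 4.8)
The plan is to show that the derivative $dg_1(\delta)/d\delta$, whose explicit formula is displayed just before the theorem, is strictly negative for every $\delta\in(0,U)$; since a function with everywhere-negative derivative on an interval is strictly decreasing, this gives the claim.

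First I would rewrite that derivative in a form that exposes its sign. The key observation is that the formula for $dg_1(\delta)/d\delta$ factors through the coefficient $g_3=g_3(\delta)$, namely
\[
\frac{dg_1(\delta)}{d\delta}
~=~ \frac{\delta}{2}\Bigl(f'''(\delta)-g_3\Bigr).
\]
This is a one-line verification: substituting $g_3=\tfrac{6f(\delta)}{\delta^3}-\tfrac{6f'(\delta)}{\delta^2}+\tfrac{3f''(\delta)}{\delta}$ into the right-hand side reproduces the four terms $-\tfrac{3}{\delta^2}f(\delta)+\tfrac{3}{\delta}f'(\delta)-\tfrac{3}{2}f''(\delta)+\tfrac{\delta}{2}f'''(\delta)$ of the given derivative, term by term.

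With the factorization in hand, the sign is immediate from \cref{prop:thm1}. For each fixed $\delta\in(0,U)$, the hypotheses of \cref{thm:monotone} (continuity on $[0,\delta]$, thrice differentiability on $(0,\delta]$, and $f'''$ decreasing on $(0,\delta]$) are exactly the hypotheses of \cref{prop:thm1} with that value of $\delta$, so property \ref{p4} applies and yields $f'''(\delta)<g_3$. Since $\delta/2>0$, the displayed identity gives $dg_1(\delta)/d\delta<0$, and the theorem follows.

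The main obstacle is not in this final deduction, which is short, but in the analytic content already packaged in property \ref{p4} of \cref{prop:thm1}: the strict inequality $f'''(\delta)<g_3$ is precisely what forces the derivative to be negative, and it is established there by a Rolle-type argument on $F:=f-g$ (tracking in turn the monotonicity of $F''',F'',F',F$ on $(0,\delta)$). In other words, the real work of this theorem is simply to notice the clean factorization; once it is seen, \cref{prop:thm1} does the heavy lifting. The one point I would double-check is that ``decreasing'' is meant strictly, so I want the strict inequality in property \ref{p4} rather than a weak one --- which is indeed what \cref{prop:thm1} provides, since $f\neq g$ is guaranteed here ($f'''$ is strictly decreasing while $g'''$ is constant on $[0,\delta]$).
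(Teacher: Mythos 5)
Your proposal is correct and is essentially identical to the paper's own proof: the same factorization $\frac{d g_1(\delta)}{d\delta} = \frac{\delta}{2}\bigl(f'''(\delta)-g_3(\delta)\bigr)$, followed by the same appeal to property \ref{p4} of \cref{prop:thm1} to get $f'''(\delta)-g_3(\delta)<0$ for each fixed $\delta\in(0,U)$. Your added remark about why the inequality is strict (because $f\neq g$, since $f'''$ is strictly decreasing while $g'''$ is constant) is a sound reading of how \cref{prop:thm1} delivers the strictness needed for ``decreasing.''
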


%\begin{theorem}\label{thm:monotone}  Fix $\delta_{\max} > 0$ (possibly $+\infty$). If a function $f$ that is four-times differentiable on $(0, \delta_{\max})$ satisfies
%\begin{itemize}
%\item  $f^{(4)}(w)\le 0$ for  $w \in (0,\delta_{\max})$,
%\item $\displaystyle \lim_{\delta \to 0^+} D(\delta) \leq 0$, where $D(\delta):= \delta^3(f'''(\delta)-g'''_{f,\delta}(0))$,
%\end{itemize}
%then $g_1(\delta)$ is decreasing for $\delta \in (0,\delta_{\max})$.
%\end{theorem}

\begin{proof}
It is easy to check that
% \begin{equation*}
% \frac{d g_3(\delta)}{d\delta} =\frac{3}{\delta}(f'''(\delta)-g_3(\delta))
% \end{equation*}
% and
\begin{equation*}
\frac{d g_1(\delta)}{d\delta} ~=~ \frac{\delta}{2}(f'''(\delta)-g_3(\delta)).
% ~=~ \frac{\delta^2}{6} \frac{d g_3(\delta)}{d\delta}.
\end{equation*}
% We will exploit this close relationship.

We want $f'''(\delta)-g_3(\delta)< 0$ on $(0,U)$, so we can
conclude that $g_1(\delta)$ is decreasing on $(0,U)$.
For a fixed $\delta\in(0,U)$, by \cref{prop:thm1}, we have $f'''(\delta)-g_3(\delta)<0$, which gives us
$g'_1(\delta)< 0$
on $(0,U)$.
\end{proof}
% We have
% \[
% D(\delta)
% ~:=~ \delta^3(f'''(\delta)-g_3(\delta))
% ~=~ 2\delta^2g_1'(\delta).
% \]
% We can easily check that
% \begin{align*}
% D'(\delta) &=~
% \delta^3f^{(4)}(\delta) + 3\delta^2(f'''(\delta)-g_3(\delta)) -\delta^3g'_3(\delta)\\
% &=~ \delta^3f^{(4)}(\delta).
% \end{align*}
% Therefore,
% $f^{(4)}(\delta)\le 0$ on $(0,\delta_{\max})$
% implies that $D'(\delta)\leq 0$
% on $(0,\delta_{\max})$.
% Finally,
% $\lim_{\delta\to 0^+} D(\delta)=0$
% implies then that $D(\delta)\leq 0$
% on $(0,\delta_{\max})$, which gives us
% $g'_1(\delta)\leq 0$
% on $(0,\delta_{\max})$.
% \end{proof}

Applying \cref{thm:monotone}, it is now a simple matter to verify that when $f$ is a root function, $g'(0)$ behaves as expected with respect to parameter $\delta$.
\begin{corollary} \label{cor:incCwp}
Let $f(w): = w^p$, for some $0<p<1$.
 Then $g_1(\delta)$ is decreasing on $(0,+\infty)$.
\end{corollary}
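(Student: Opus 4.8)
The plan is to verify that $f(w) := w^p$ satisfies every hypothesis of \cref{thm:monotone} (with $U = +\infty$), so that the conclusion follows immediately as a specialization. First I would dispatch the routine structural requirements: $f(0) = 0^p = 0$, and as a power function $f$ is infinitely differentiable on $(0,+\infty)$, hence in particular twice differentiable at each $\delta > 0$, so $f$ satisfies the minimal $\delta$-smoothing requirements for all $\delta$ in its domain. Continuity on $[0,+\infty)$ is immediate, and thrice differentiability on $(0,+\infty)$ is witnessed by the explicit derivatives $f'(w) = p w^{p-1}$, $f''(w) = p(p-1) w^{p-2}$, and $f'''(w) = p(p-1)(p-2)\, w^{p-3}$.

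The only step with any content is to confirm that $f'''$ is decreasing on $(0,+\infty)$, and I would handle it by examining the two factors of $f'''(w) = p(p-1)(p-2)\, w^{p-3}$ separately. Since $0 < p < 1$, the constant $p(p-1)(p-2)$ is a product of one positive and two negative factors, hence positive; meanwhile the exponent $p - 3$ lies in $(-3,-2)$ and is therefore negative, so $w^{p-3}$ is strictly decreasing on $(0,+\infty)$. A positive multiple of a decreasing function is decreasing, which gives the required monotonicity of $f'''$.

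Having checked all the hypotheses, I would simply invoke \cref{thm:monotone} to conclude that $g_1(\delta)$ is decreasing on $(0,+\infty)$. I do not anticipate any real obstacle: the corollary is essentially a direct application of \cref{thm:monotone}, and the whole argument reduces to the elementary sign analysis of $f'''$ for $0 < p < 1$.
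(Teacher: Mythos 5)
Your proposal is correct and follows essentially the same route as the paper: both verify the hypotheses of \cref{thm:monotone} and reduce the corollary to checking that $f'''$ is decreasing on $(0,+\infty)$. The only (immaterial) difference is the verification of that monotonicity --- the paper computes $f^{(4)}(w) = p(p-1)(p-2)(p-3)w^{p-4} < 0$, while you observe directly that $f'''(w) = p(p-1)(p-2)w^{p-3}$ is a positive constant times the strictly decreasing function $w^{p-3}$; both sign analyses are valid.
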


\begin{proof}
We must verify that $f$ satisfies the hypothesis of \cref{thm:monotone}. Consider the following derivatives of $f$ on $(0,+\infty)$:
\begin{align*}
f'(w) &= pw^{p-1};\\
f''(w) &= p(p-1)w^{p-2};\\
f'''(w) &= p(p-1)(p-2)w^{p-3};\\
f^{(4)}(w) &= p(p-1)(p-2)(p-3)w^{p-4}.
\end{align*}
Because $0<p<1$, $f^{(4)}(w)<0$ on $(0,+\infty)$, which implies $f'''$ is decreasing on $(0,+\infty)$, thus \cref{thm:monotone} applies.
% In this case, $D(\delta)= \delta^p(p-1)(p-2)(p-3)$, and we can easily check the
% hypotheses of \cref{thm:monotone}.
\end{proof}

The next example demonstrates that \cref{thm:monotone} applies to functions that are not root functions.

\begin{example} \label{ex:incCSH}
Let $f(w) := {\rm ArcSinh}(\sqrt{w})=\log\left(\sqrt{w}+\sqrt{1+w}\right)$, for $w \geq 0$.
Checking the hypotheses of Theorem \ref{thm:monotone}, we calculate the following derivatives of $f$ on $(0,+\infty)$:
\begin{align*}
f'(w) &= \frac{1}{2\sqrt{w(w+1)}};\\
f''(w) &= -\frac{2w+1}{4\left(w(w+1)\right)^{\frac32}};\\
f'''(w) &= \frac{8w^2+8w+3}{8\left(w(w+1)\right)^{\frac52}};\\
f^{(4)}(w)&= -\frac{48w^3+72w^2+54w+15}{16\left(w(w+1)\right)^{\frac72}}.
\end{align*}
For $w\in(0,+\infty)$, it is easy to verify that $f^{(4)}(w)<0$, which implies $f'''$ is decreasing on $(0,+\infty)$. By \cref{thm:monotone},
$g_1(\delta)$ is decreasing for $\delta \in (0,+\infty)$.
% Checking the hypotheses of Theorem \ref{thm:monotone} we calculate
% \[
% f^{(4)}(\delta)= -\frac{48\delta^3+72\delta^2+54\delta+15}{16\left(\delta(\delta+1)\right)^{\frac72}},
% \]
% which is clearly nonpositive on $(0,+\infty)$.  Additionally, we have
% \[
% D(\delta) = \frac{\delta^{\frac12}(44\delta^2+74\delta+33)}{8(\delta+1)^{\frac52}}-6\log\left(\sqrt{\delta}+\sqrt{1+\delta}\right),
% \]
% which is continuous on $[0,+\infty)$ with $D(0)=0$.  By \cref{thm:monotone},
% $g_1(\delta)$ is decreasing for $\delta \in (0,+\infty)$.
\end{example}

%%%%%%%%%%%%%
\section{Lower bound for \texorpdfstring{$f$}{f}}
\label{sec:lowb}
In \S\ref{lowerbound}, we establish \cref{thm:1}: $g$ provides a lower bound for a broad class of functions $f$ which includes all root functions, solving an open problem from \cite{lee2016virtuous}.
We provide an example to demonstrate that this class of functions contains  functions
beyond root functions. In \S\ref{morepossibilities}, we present variations on the hypotheses of \cref{thm:1}, along with supporting examples.
 In \S\ref{roleofTdelta}, we veer briefly from root-like functions to provide an example of a function $f$ that is neither increasing nor concave, but for which $g$ serves as a lower bound. In other words, we show that \cref{thm:1} does not require $f$ to be increasing and concave. Also we give an example to show that for an increasing and concave function $f$, \eqref{eq:TSdelta} is not necessary for \cref{thm:1}. In \S\ref{fgmonotone}, we demonstrate  that
% the $l_\infty$-norm of $f-g$
the worst-case error of $g$ as an approximation of $f$
 is increasing with respect to $\delta$ under the same conditions as \cref{thm:monotone}.

\subsection{Lower bounding}\label{lowerbound}

Because the $\delta$-smoothing $g$ is simply $f$ on $(\delta,U)$, we restrict our attention to lower bounding on the interval $[0,\delta]$.
The parameter $\delta$  provides control over $g'(0)$,
and in a predictable manner under the hypotheses of \cref{thm:monotone}.
As $\delta$ vanishes, $g$ tends to $f$, but the choice of $\delta$ is dictated by the numerical tolerance of the software with respect to the value of $g'(0)$. The following theorem shows that $g$ provides a lower bound for a broad class of functions $f$ which is neither necessarily increasing nor concave (examples are in \S\ref{roleofTdelta}), but includes all root functions.

% Consider a function $f$ satisfying the minimal $\delta$-smoothing requirements (\ref{min1}-\ref{min3})

\begin{theorem}
\label{thm:1}
Let $f$ be a univariate function having a domain  $I:=[0,U)$, where $U \in \{ w\in\mathbb{R} ~:~ w > 0\} \cup \{+\infty\}$. Suppose that $\delta>0$ is in the domain of $f$. Assume that $f$ satisfies the minimal $\delta$-smoothing requirements.
Assume further that
\begin{itemize}
	\item $f$ is continuous on $[0,\delta]$;
	\item $f'''$ exists and is decreasing on $(0,\delta]$.
\end{itemize}
Then $g(w)<f(w)$ for all $w\in (0,\delta)$.
\end{theorem}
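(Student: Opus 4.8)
The plan is to introduce $F := f - g$ on $[0,\delta]$ and to show that $F > 0$ throughout the open interval $(0,\delta)$, which is precisely the asserted strict inequality $g < f$ there. The definition of $g$ together with the minimal $\delta$-smoothing requirements immediately supplies four boundary conditions: $F(0) = 0$ (since $g(0) = f(0) = 0$) and $F(\delta) = F'(\delta) = F''(\delta) = 0$ (since $g$ matches $f,f',f''$ at $\delta$). Because $g$ is a cubic on $[0,\delta]$, its third derivative is the constant $g_3$, so $F''' = f''' - g_3$ inherits the hypothesis on $f'''$: thus $F'''$ is decreasing on $(0,\delta]$.

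The engine of the argument is \cref{prop:thm1}, whose hypotheses coincide exactly with those assumed here. Read through $F$, its four conclusions become $\lim_{w\to 0^+} F'(w) > 0$, $\lim_{w\to 0^+} F''(w) < 0$, $\lim_{w\to 0^+} F'''(w) > 0$, and $F'''(\delta) < 0$. The idea is then to ``integrate'' this sign data upward, from $F'''$ to $F$, peeling off exactly one sign change at each level while invoking the relevant boundary value.

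First, since $F'''$ is decreasing with a positive right-limit at $0$ and is negative at $\delta$, it has a unique zero $c \in (0,\delta)$ with $F''' > 0$ on $(0,c)$ and $F''' < 0$ on $(c,\delta)$; hence $F''$ is first increasing, then decreasing. Combining this shape with $F''(\delta) = 0$ and $\lim_{w\to 0^+} F''(w) < 0$ forces a single sign change of $F''$ from negative to positive, so $F'$ is first decreasing, then increasing. Feeding in $F'(\delta) = 0$ and $\lim_{w\to 0^+} F'(w) > 0$ then forces a single sign change of $F'$ from positive to negative, so $F$ is first increasing, then decreasing. Finally, $F$ rises from $F(0) = 0$ and falls back to $F(\delta) = 0$, which pins $F$ strictly above $0$ on all of $(0,\delta)$.

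The routine but delicate part is the bookkeeping in the third paragraph: at each level one must check that the interior extremum, together with the one-sided limit and the matching boundary value, really does produce exactly one sign change of the correct orientation, rather than the function remaining one-signed. These are the same sign-chasing manipulations already used in the proof of \cref{prop:thm1}, so no tool beyond that proposition is needed; the real content is simply organizing the four derivative facts into the correct cascade. I expect the only subtlety to be confirming strictness, i.e.\ that $F > 0$ rather than $F \geq 0$ in the interior, which holds because the strict decrease of $f'''$ makes each sign change genuine and rules out the degenerate possibility $f \equiv g$.
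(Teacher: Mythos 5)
Your proof is correct, but it follows a genuinely different route from the paper's own proof of \cref{thm:1}. The paper argues via a Hermite (osculating) interpolation error estimate: for a fixed $w\in(0,\delta)$ it sets $K:=\bigl(f(w)-g(w)\bigr)/\bigl(w(w-\delta)^3\bigr)$, forms the auxiliary function $F(x):=f(x)-g(x)-Kx(x-\delta)^3$, which vanishes at $0$, at $w$, and to third order at $\delta$, and applies Rolle's theorem three times to produce two distinct zeros of $F'''$; since $F'''(x)=f'''(x)-g_3-K(24x-18\delta)$ and $f'''$ is strictly decreasing, equality of $F'''$ at those two points forces $K<0$, i.e., $f>g$ on $(0,\delta)$. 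Your argument --- feeding the four strict endpoint sign facts of \cref{prop:thm1} into a level-by-level sign cascade on $F:=f-g$ --- is sound: the hypotheses of \cref{thm:1} do coincide with those of \cref{prop:thm1}, the one-sided limits involved exist in the extended reals because each derivative is eventually monotone near $0$, and each stage of the cascade is forced exactly as you describe. In fact the paper itself endorses your route in \cref{rk:w}, observing that properties (1)--(4) of \cref{prop:thm1} hold under the hypotheses of \cref{thm:1}, so the proof technique of \cref{thm:1_gen} applies. Comparing the two: the Rolle/interpolation argument is self-contained (it never invokes \cref{prop:thm1}) and is a one-shot pointwise estimate, whereas your cascade leans on the proposition (legitimately --- it is proved independently, so there is no circularity) and buys more than the bare inequality: it yields the interior roots $0<w_2<w_1<w_0<\delta$ of $F'$, $F''$, $F'''$ together with their sign patterns, which is exactly the structural information the paper extracts in \cref{rk:w} and reuses in the proofs of \cref{thm:fgmonotone} and \cref{thm:fg_gen}, and it is the argument that generalizes to \cref{thm:1_gen}, where $f'''$ is decreasing then increasing. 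One small correction: strictness of $F>0$ in the interior does not require a separate appeal to ruling out $f\equiv g$; it falls out directly because $F$ is strictly increasing on $[0,w_2]$ and strictly decreasing on $[w_2,\delta]$ with $F(0)=F(\delta)=0$.
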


%\begin{theorem}
%\label{thm:1}
%Let $\delta>0$ be given. In addition to the minimal $\delta$-smoothing requirements (\ref{min1}-\ref{min3}) on $f$, assume also that
%\begin{itemize}
%	\item $f$ is continuous on $[0,\delta]$;
%	\item $f$ is thrice differentiable on $(0,\delta]$;
%	\item $f'''$ is decreasing on $(0,\delta]$.
%\end{itemize}
%Then $f(w)\ge g(w)$ for all $w\in [0,\delta]$.
%\end{theorem}

\begin{proof}
This is a special case of ``osculating interpolation'' (also know as Hermite interpolation; see \cite{burden2011numerical}, for example). We are going to use the technique of error estimation for osculating interpolation to prove that
\[
K(w)=\frac{f(w) - g(w)}{w(w-\delta)^3}<0,~\text{for}~w\in (0,\delta).
\]

For some fixed $w\in(0,\delta)$, denote $K:=K(w)$ for simplicity, and introduce a new function $F$ with respect to $x$ as
\[
F(x) := f(x) - g(x) -Kx(x-\delta)^3.
\]
By the definition of $K$, we have $F(w)=0$. Also from the relationships between $f$ and $g$, we have $F(0)=F(\delta)=F'(\delta)=F''(\delta)=0$. It is easy to see that $0,w,\delta$ are three zeros for $F(x)$. Because $F(x)$ is continuous on $[0,\delta]$ and differentiable on $(0,\delta)$, according to Rolle's Theorem, there exists $0<w_1<w<\eta_1<\delta$ such that $F'(w_1)=F'(\eta_1)=0$. Noting that $F'(\delta)=0$ and that $F'(x)$ is differentiable on $[w_1,\delta]$, we apply Rolle's Theorem and get $w_1<w_2<\eta_1<\eta_2<\delta$ such that $F''(w_2)=F''(\eta_2)=0$. Using Rolle's Theorem again on $F''(x)$ with $F''(\delta)=0$ and $F''(x)$ is differentiable on $[w_2,\delta]$, we get $w_2<w_3<\eta_2<\eta_3<\delta$ such that $F'''(w_3)=F'''(\eta_3)=0$.

Now, $F'''(x)=f'''(x) - g_3 - K(24x-18\delta)$.  Applying $F'''(w_3)=F'''(\eta_3)$ and $f'''(w_3)> f'''(\eta_3)$, we can conclude that $K(24w_3-18\delta)> K(24\eta_3-18\delta)$.  But this last inequality holds only when $K < 0$.
\end{proof}
%Because $f'''(w_3)\ge f'''(\eta_3)$ and $F'''(x)=f'''(x) - g_3 - K(24x-18\delta)$, $F'''(w_3)=F'''(\eta_3)$, we have $K(24w_3-18\delta)\ge K(24\eta_3-18\delta)$, which holds only when $K\le0$.

\bigskip
% Before continuing, we digress momentarily from our primary focus of root-like functions to point out that an almost identical proof can be applied to demonstrate that $g$ is an \emph{upper}-bound on $f$ if we assume that $f'''$ is \emph{increasing} (rather than decreasing) on $(0, \delta]$.
% \bigskip

It is easy to see that  \Cref{thm:1} has a counterpart when $f'''$ is increasing rather than decreasing, by applying \Cref{thm:1}  to $-f$.

\begin{corollary}
\label{prop:ub}
Let $f$ be a univariate function having a domain  $I:=[0,U)$, where $U \in \{ w\in\mathbb{R} ~:~ w > 0\} \cup \{+\infty\}$. Suppose that $\delta>0$ is in the domain of $f$. Assume that $f$ satisfies the minimal $\delta$-smoothing requirements.
Assume further that
\begin{itemize}
	\item $f$ is continuous on $[0,\delta]$;
	\item $f'''$ exists and  is increasing on $(0,\delta]$.
\end{itemize}
Then  $f(w)< g(w)$ for all $w\in (0,\delta)$.
\end{corollary}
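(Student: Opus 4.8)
The plan is to reduce directly to \Cref{thm:1} by applying it to the negated function $\tilde f := -f$, exactly as the remark preceding the statement suggests. The whole argument hinges on two observations: that $\tilde f$ inherits all the hypotheses of \Cref{thm:1}, and that the $\delta$-smoothing operation is homogeneous of degree one in $f$, so that the $\delta$-smoothing of $\tilde f$ is precisely $-g$.

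First I would verify the hypotheses for $\tilde f$. Since $f(0)=0$ we have $\tilde f(0)=0$, and since $f$ is twice differentiable at $\delta$ so is $\tilde f$; thus $\tilde f$ satisfies the minimal $\delta$-smoothing requirements. Continuity of $\tilde f$ on $[0,\delta]$ is immediate, and $\tilde f''' = -f'''$ exists on $(0,\delta]$ and is \emph{decreasing} there precisely because $f'''$ is increasing. Hence $\tilde f$ meets every hypothesis of \Cref{thm:1}.

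Next I would record the key linearity fact. Inspecting the defining formulas, each of $g_1,g_2,g_3$ is a linear (indeed homogeneous) function of the triple $\bigl(f(\delta),f'(\delta),f''(\delta)\bigr)$. Replacing $f$ by $-f$ negates this triple, hence negates each coefficient; and on $(\delta,U)$ the smoothing simply copies $\tilde f=-f$. Therefore the $\delta$-smoothing $\tilde g$ of $\tilde f$ satisfies $\tilde g \equiv -g$ on all of $[0,U)$.

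Finally I would invoke \Cref{thm:1} for $\tilde f$: it yields $\tilde g(w) < \tilde f(w)$ for all $w \in (0,\delta)$, i.e. $-g(w) < -f(w)$, which upon negating becomes exactly $f(w) < g(w)$ on $(0,\delta)$, as required. The only step needing any care is the linearity observation $\tilde g = -g$; everything else is routine inheritance of hypotheses, so I do not anticipate any genuine obstacle here.
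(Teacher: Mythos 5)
Your proposal is correct and is exactly the paper's argument: the paper proves this corollary by the one-line remark that it follows from applying \cref{thm:1} to $-f$, which is precisely your reduction. Your write-up merely fills in the routine details (inheritance of hypotheses and the homogeneity of the coefficients $g_1,g_2,g_3$ giving $\tilde g=-g$) that the paper leaves implicit.
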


Returning to our primary motivation, the following corollary demonstrates that \cref{thm:1} generalizes the result  in \cite{lee2016virtuous}, which states that $g$ is a lower bound for root functions of the form $f(w) = w^{1/q}$, for integer $q \geq 2$.

\begin{corollary} \label{cor:wp}
Let $f(w): = w^p$, for some $0<p<1$.
For all $\delta > 0$, if $g$ is the $\delta$-smoothing of $f$, then $g(w) \leq f(w)$, for $w \geq 0$.
\end{corollary}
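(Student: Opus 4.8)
The plan is to invoke \cref{thm:1} directly, since its hypotheses are essentially tailor-made for root functions. First I would verify that $f(w) = w^p$ with $0 < p < 1$ satisfies those hypotheses on $[0,\delta]$. Continuity on $[0,\delta]$ is immediate, and $f$ satisfies the minimal $\delta$-smoothing requirements because $f(0) = 0$ and $f$ is infinitely differentiable at any $\delta > 0$. The one substantive point is that $f'''$ is decreasing on $(0,\delta]$. I would establish this by differentiating once more: reusing the computations already recorded in the proof of \cref{cor:incCwp}, we have $f^{(4)}(w) = p(p-1)(p-2)(p-3)w^{p-4}$. For $0 < p < 1$ the leading constant is a product with signs $(+)(-)(-)(-)$, which is \emph{negative}, and $w^{p-4} > 0$ for $w > 0$; hence $f^{(4)} < 0$ on $(0,\delta]$, so $f'''$ is indeed decreasing there.

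With the hypotheses in place, \cref{thm:1} yields the strict inequality $g(w) < f(w)$ on the open interval $(0,\delta)$. To upgrade this to the claimed (non-strict) inequality on all of $[0,+\infty)$, I would stitch the interior bound together with the boundary and tail behavior. At $w = 0$ we have $g(0) = f(0) = 0$; at $w = \delta$ the interpolation conditions defining the $\delta$-smoothing give $g(\delta) = f(\delta)$; and for $w > \delta$ the $\delta$-smoothing coincides with $f$ by definition, so $g(w) = f(w)$ there. Combining the strict inequality on $(0,\delta)$ with these equalities at $0$, at $\delta$, and on $(\delta,+\infty)$ gives $g(w) \leq f(w)$ for all $w \geq 0$, which is the non-strict statement of the corollary.

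I do not expect any serious obstacle; the corollary is effectively a verification that $w^p$ falls inside the hypothesis class of \cref{thm:1}. The only points requiring mild care are tracking the sign of $p(p-1)(p-2)(p-3)$ correctly (it is negative, not positive, for $0 < p < 1$), and remembering that \cref{thm:1} only delivers a \emph{strict} bound on the \emph{open} interval $(0,\delta)$, so the endpoints and the region $w \geq \delta$ must be handled separately in order to justify the $\leq$ appearing in the statement.
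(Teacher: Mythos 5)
Your proof is correct and takes essentially the same approach as the paper: use the derivative computations recorded for \cref{cor:incCwp} to see that $f^{(4)}(w)=p(p-1)(p-2)(p-3)w^{p-4}<0$ for $0<p<1$, hence $f'''$ is decreasing on $(0,\delta]$, and then invoke \cref{thm:1}. The paper leaves the endpoint and tail cases ($w=0$, $w=\delta$, $w>\delta$) implicit since $g$ agrees with $f$ there by construction, so your explicit treatment of them is only added detail, not a different argument.
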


\begin{proof}
According to \cref{cor:incCwp}, we can simply verify that $f'''$ is decreasing on $(0,\delta]$,
% Consider the following derivatives of $f$ on $(0,+\infty)$:
% \begin{align*}
% f'(w) &= pw^{p-1},\\
% f''(w) &= p(p-1)w^{p-2},\\
% f'''(w) &= p(p-1)(p-2)w^{p-3},\\
% f^{(4)}(w) &= p(p-1)(p-2)(p-3)w^{p-4}.
% \end{align*}
% Since $0<p<1$, $f^{(4)}(w)<0$ on $(0,\delta)$, which implies $f'''$ is decreasing on $(0,\delta]$
thus \cref{thm:1} applies.
\end{proof}

The next example demonstrates that there are other increasing and concave functions (besides root functions) to which \cref{thm:1} applies.

\begin{example} \label{ex:log}
Consider $f(w) :={\rm ArcSinh}(\sqrt{w})=\log\left(\sqrt{w}+\sqrt{1+w}\right)$, for $w \geq 0$.  We demonstrate that $f$ satisfies the conditions of \cref{thm:1}, so that $g$ lower bounds $f$ on $[0,+\infty)$.
From \cref{ex:incCSH}, we can easily verify that $f'''$ is decreasing on $(0,\delta]$, thus $f$ satisfies the conditions of \cref{thm:1}.
% Consider the following derivatives of $f$ on $(0,+\infty)$:
% \begin{align*}
% f'(w) &= \frac{1}{2\sqrt{w(w+1)}},\\
% f''(w) &= -\frac{2w+1}{4\left(w(w+1)\right)^{\frac32}},\\
% f'''(w) &= \frac{8w^2+8w+3}{8\left(w(w+1)\right)^{\frac52}},\\
% f^{(4)}(w)&= -\frac{48w^3+72w^2+54w+15}{16\left(w(w+1)\right)^{\frac72}}.
% \end{align*}
% For $w\in(0,\delta]$, it is easy to verify that $f^{(4)}(w)<0$, thus $f$ satisfies the conditions of \cref{thm:1}.
\hfill $\Diamond$
\end{example}
\bigskip

%%%
\subsection{More possibilities for a lower bound}\label{morepossibilities}
We digress again to provide results that take us beyond root functions.  In particular,
there are other possibilities for $f'''$ (besides decreasing) to ensure that $g$ is a lower-bound on $f$.
For example, in \cref{thm:1_gen} below, if we have $f'''$ first decreasing and then increasing on $(0,\delta]$, we can add conditions almost identical to properties \ref{p1}-\ref{p4} of \cref{prop:thm1}  to ensure a lower-bounding $g$.

\begin{theorem}\label{thm:1_gen}
Let $f$ be a univariate function having a domain  $I:=[0,U)$, where $U \in \{ w\in\mathbb{R} ~:~ w > 0\} \cup \{+\infty\}$. Suppose that $\delta>0$ is in the domain of $f$. Assume that $f$ satisfies the minimal $\delta$-smoothing requirements.
Assume further that
\begin{itemize}
	\item $f$ is continuous on $[0,\delta]$ and thrice differentiable on $(0,\delta]$;
	\item $f'''$ is first decreasing and then increasing on $(0,\delta]$.
\end{itemize}
Moreover, suppose that
\begin{enumerate}[label=(\arabic*),leftmargin=5\parindent,itemsep=1ex]
\item $\displaystyle\lim_{w\to 0^+}{f'(w)}> g_1$;
\item $\displaystyle\lim_{w\to 0^+}{f''(w)}< g_2$;
\item $\displaystyle\lim_{w\to 0^+}{f'''(w)}> g_3$;
%\item[($4^{\scaleto{\le}{5pt}}$)] $f'''(\delta)\le g_3$,
\item[($4^{{\leq}}$)] $f'''(\delta)\le g_3$,
\end{enumerate}
\smallskip
then $f(w)\geq g(w)$ for all $w\in [0,+\infty)$.
\end{theorem}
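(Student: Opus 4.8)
The plan is to run the same kind of sign-chasing cascade used in the proof of \cref{prop:thm1}, but in the ``forward'' direction: here the four numbered hypotheses are \emph{given} rather than derived, and the goal is to deduce the sign of $F:=f-g$. As in that proof I restrict attention to $[0,\delta]$, since $g=f$ on $(\delta,U)$ makes the bound trivial there. On $[0,\delta]$ set $F:=f-g$ and record the Hermite data $F(0)=0$ and $F(\delta)=F'(\delta)=F''(\delta)=0$ (but \emph{not} $F'''(\delta)=0$). Since $g$ is cubic on $[0,\delta]$, $g'''\equiv g_3$, so $F'''=f'''-g_3$ inherits the shape of $f'''$: it is first decreasing and then increasing on $(0,\delta]$.

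The crux is to pin down the sign pattern of $F'''$, and this is the step I expect to be the main obstacle, since it is where the unimodal (``valley'') shape and both endpoint conditions must be combined. Hypotheses (3) and ($4^{\le}$) say $\lim_{w\to0^+}F'''(w)>0$ and $F'''(\delta)\le0$. On the increasing branch $F'''$ stays below $F'''(\delta)\le0$, hence is negative there, forcing its minimum to be negative; on the decreasing branch $F'''$ falls strictly from a positive limit to this negative minimum, so by the intermediate value theorem it crosses zero exactly once, at some $a\in(0,\delta)$. Thus $F'''>0$ on $(0,a)$ and $F'''<0$ on $(a,\delta)$. (The purely-decreasing case of \cref{thm:1} is recovered when the increasing branch is empty; a purely-increasing $f'''$ is excluded by ($4^{\le}$), so no extra cases arise.)

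From here the argument cascades up one derivative at a time, each level reproducing the same template: a function that is monotone-then-monotone, hence unimodal, with a known value at $\delta$ and a known one-sided limit at $0^+$, must change sign exactly once. Integrating the sign of $F'''$ shows $F''$ is increasing then decreasing; with hypothesis (2), $\lim_{w\to0^+}F''<0$, and $F''(\delta)=0$, this yields $F''<0$ on $(0,b)$ and $F''>0$ on $(b,\delta)$ for some $b$. Hence $F'$ is decreasing then increasing; with hypothesis (1), $\lim_{w\to0^+}F'>0$, and $F'(\delta)=0$, we get $F'>0$ on $(0,c)$ and $F'<0$ on $(c,\delta)$ for some $c$. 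Finally $F$ is increasing on $(0,c)$ and decreasing on $(c,\delta)$ with $F(0)=F(\delta)=0$, so $F\ge0$ on $[0,\delta]$; that is, $f\ge g$ there, and trivially on $(\delta,U)$.

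One point of care throughout is that $f$ need only be thrice differentiable on the \emph{open} interval $(0,\delta]$, so $F',F'',F'''$ and hypotheses (1)--(3) live as one-sided limits at $0$. I would phrase each ``near $0$'' conclusion via the strict limit (e.g.\ $\lim_{w\to0^+}F'''>0$ gives $F'''>0$ on some $(0,\epsilon)$), which keeps the sign bookkeeping valid without assuming differentiability at $0$. I would also remark why the clean Hermite-error/Rolle trick of \cref{thm:1} does not transfer: that argument needs $f'''$ monotone on all of $(0,\delta)$ to reach its contradiction, which fails when $f'''$ is only first-decreasing-then-increasing --- precisely the gap that hypotheses (1)--($4^{\le}$) are designed to fill.
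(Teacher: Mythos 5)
Your proposal is correct and takes essentially the same route as the paper's proof: the identical sign-chasing cascade on $F:=f-g$ using the Hermite data $F(0)=F(\delta)=F'(\delta)=F''(\delta)=0$, locating exactly one sign change of $F'''$, then $F''$, then $F'$ (your $a,b,c$ are the paper's $w_0,w_1,w_2$), and concluding $F\ge 0$ from $F(0)=F(\delta)=0$. Your added remarks (one-sided limits at $0$, and why the Rolle/Hermite-error argument of \cref{thm:1} does not transfer to a non-monotone $f'''$) are sound but do not change the argument.
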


%\begin{theorem}\label{thm:1_gen}
%Let $\delta>0$ be given. Let $f$ be a function satisfying all of the conditions of \cref{thm:1}, except that the condition that $f'''$ is decreasing is replaced by
%$$
%f'''~\text{is first decreasing and then increasing on}~(0,\delta].
%$$
%Moreover, suppose that
%\begin{enumerate}[label=(\roman*)]
%\item $\displaystyle\lim_{w\to 0^+}{f'(w)}> g_1$;
%\item $\displaystyle\lim_{w\to 0^+}{f''(w)}< g_2$;
%\item $\displaystyle\lim_{w\to 0^+}{f'''(w)}> g_3$;
%\item[($iv^{(\leq)}$)] $f'''(\delta)\le g_3$,
%\end{enumerate}
%\smallskip
%then $f(w)\ge g(w)$ for all $w\in [0,+\infty)$.
%\end{theorem}
\begin{proof}
According to the definition of $g$, we have $g(0)=0$, $g^{(i)}(\delta)=f^{(i)}(\delta)$, for $i=0,1,2$. We consider the function $F(w) := f(w)- g(w)$, for $w \in [0,\delta]$, which has
\[F(0)=F(\delta)=F'(\delta)=F''(\delta)=0.\]
In what follows, we begin with the third derivative of $F$ and work our way to the conclusion that $F(w) > 0$ for $w \in (0,\delta)$.

First, we note that
$F'''(w) = f'''(w)-g_3$ is a first decreasing and then increasing function with
\[
\lim_{w\to 0^+}F'''(w)
>0 \text{~~~~and~~~~}  F'''(\delta)
\le0.
\]
Therefore, there exists exactly one root of $F'''$ in $(0,\delta)$, which we denote by $w_0$.

From this, we conclude that $F''$ is increasing on $[0,w_0]$ and decreasing on $[w_0,\delta]$, so that $F''(w_0)>F''(\delta)=0$.  Combining this fact with
\[
\lim_{w\to 0^+}F''(w)=\lim_{w\to 0^+}(f''(w)-g_3w-g_2)
=\lim_{w\to 0^+}(f''(w)-g_2)<0,
\]
we see that $F''$ has exactly one root in $(0,w_0)$, which we denote by $w_1$.  In summary, we have
\[
F''(w)~~
\left \{
\begin{array}{ll}
<~0, & 0 < w < w_1; \\
=~0, & w\in \{w_1,\delta\}; \\
>~0, & w_1 < w < \delta. \\
\end{array}
\right.
\]

Applying these results, we conclude that $F'$ is decreasing on $[0,w_1]$ to a minimum of $F'(w_1)<F'(\delta) = 0$. Because
\[\lim_{w\to 0^+}F'(w)=\lim_{w\to 0^+}(f'(w)-\frac{1}{2}g_3w^2-g_2w-g_1)
=\lim_{w\to 0^+}(f'(w)-g_1)>0,\]
we see that $F'$ has exactly one root in $(0,w_1)$, which we denote by $w_2$, and
\[
F'(w)~~
\left \{
\begin{array}{ll}
>~0, & 0 < w < w_2; \\
=~0, & w\in \{w_2,\delta\}; \\
<~0, & w_2 < w < \delta.
\end{array}
\right.
\]

By properties of its derivative, $F(w)$ is increasing on $[0,w_2]$ and decreasing on $[w_2,\delta]$. Because $F(0)=F(\delta)=0$, we have that $F(w) = f(w) - g(w) >0$ for $w\in (0,\delta)$.  Recalling that $f(w) = g(w)$ for $w \in  \{0\} \cup [\delta, \infty)$, we conclude that $g\leq f$ on $[0,+\infty)$.
\end{proof}

\bigskip
\begin{remark}\label{rk:w}
%The requirements for \cref{thm:1} and \cref{prop:thm1} are identical.  Therefore, another proof technique for \cref{thm:1} uses the same strategy we applied to prove \cref{thm:1_gen} using \cref{prop:thm1} properties \ref{p1}-\ref{p4}.

If $f'''$ is decreasing, then the hypotheses of \cref{thm:1} imply properties \ref{p1}-\ref{p4} of \cref{prop:thm1}. By employing these properties, we can use the same proof technique from \cref{thm:1_gen} to prove \cref{thm:1}.
As in the proof of \cref{thm:1_gen}, we can prove $f\ge g$ by considering the function $F:=f-g$.  The third derivative, $F'''(w) = f'''(w)-g_3$, is decreasing with $\lim_{w\to 0^+}F'''(w)=\lim_{w\to 0^+}f'''(w)-g_3>0$ and $F'''(\delta)=f'''(\delta)-g_3<0$.   Therefore, there exists exactly one root of $F'''$ in $(0,\delta)$, which we denote by $w_0$. The rest of the proof is the same as that of \cref{thm:1_gen}. From the proof, we can find the roots $w_0,w_1,w_2$ of the derivatives of the function $F$ and the same characterization for the derivatives as \cref{thm:1_gen}.  We require this characterization in the proof of \cref{thm:fgmonotone} and \cref{thm:fg_gen}.

% Perhaps it is useful to emphasize now that we chose to use the osculating interpolation technique in the proof of \cref{thm:1} so that \cref{prop:ub} would easily follow.
% {\color{red} But is this a good point to make, in light of my observation
% about $-f$?}
\end{remark}

In order to demonstrate the applicability of \cref{thm:1_gen}, we construct \cref{ex:thm1_gen} using the general form described in \cref{ex:fg} below.  Inspired by \cref{ex:AB}, we build a continuous piecewise-defined function specified as a quintic on $[0,w_0]$, and a shifted square root function on $(w_0,+\infty)$.  We will use the same general form again in \cref{ex:gg}. %to prove \cref{pop:gg} in Section \ref{roleofTdelta}.
\bigskip

\begin{example} \label{ex:fg}
Consider the function
\begin{displaymath}
f(w) := \left \{
\begin{array}{ll}
a_5w^5+a_4w^4+a_3w^3+a_2w^2+a_1w, & 0\leq w\leq w_0;\\
a\sqrt{w-c}+b,&w > w_0.
\end{array}
\right .
\end{displaymath}
After fixing the values of the parameters $\delta$, $w_0$, $a_2$, $a_3$, $a_4$, and $a_5$ so that $\frac{f''(w_0)}{f'''(w_0)} \leq 0$,  we ensure continuity and thrice differentiability of $f$ at $w_0$ by calculating the remaining parameters as follows:
\begin{align*}
c &= w_0 + \frac{3f''(w_0)}{2f'''(w_0)};\\
a_1 &= -2f''(w_0)(w_0-c) - (5a_5w_0^4+4a_4w_0^3+3a_3w_0^2+2a_2w_0);\\
a &= \frac{8f'''(w_0)(w_0-c)^{\frac{5}{2}}}{3};\\
%b &= (a_5w_0^5+a_4w_0^4+a_3w_0^3+a_2w_0^2+a_1) - a\sqrt{w_0-c}.
b &= f(w_0) - a\sqrt{w_0-c}.
\end{align*}
For $\delta\le w_0$, we have the $\delta$-smoothing $g(w) = g_1 w +  \frac{1}{2}g_2 w^2 +  \frac{1}{6}g_3 w^3$, where
\begin{align*}
g_1 &= 3a_5\delta^4+a_4\delta^3+a_1;\\
g_2 &= -16a_5\delta^3-6a_4\delta^2+2a_2;\\
g_3 &= 36a_5\delta^2+18a_4\delta+6a_3.
\end{align*}
(The requirement that $\frac{f''(w_0)}{f'''(w_0)} \leq 0$ ensures that $\sqrt{w-c}$ is real-valued for $w > w_0$.)
\hfill  $\Diamond$
\end{example}

\bigskip
And now we are ready to build a function that satisfies the hypotheses of \cref{thm:1_gen}.
\bigskip
% {\large\textcolor{blue}{Luze, it would be nice to explain the choices of constants in this example (below), since it is now the first to use the general form of the previous example (above).  We need $\delta$ here, too, which looks  like 1.  (I might have accidentally deleted that detail.  Should show up in a previous version.)}}
\begin{example} \label{ex:thm1_gen}
Following \cref{ex:fg}, let
\begin{displaymath}
f(w) := \left \{
\begin{array}{ll}
a_5w^5+a_4w^4+a_3w^3+a_2w^2+a_1w, & w\le w_0;\\
a\sqrt{w-c}+b,&w > w_0.
\end{array}
\right .
\end{displaymath}
We seek parameters of $f$ for which $g_2\le0$, and all conditions of \cref{thm:1_gen} are satisfied. For $0 \leq w \leq w_0$, we have $f'''(w) = 60a_5 w^2 + 24a_4 w + 6a_3$ and $f''''(w) = 120a_5 w + 24a_4$.  In order to have $f'''(w)$ first decreasing and then increasing on $[0,\delta]$, we require $a_5>0$, $a_4 <0$ and $5a_5\delta+a_4>0$.

For example, choose $\delta=1$, $a_4=-4$, $a_5=1$. It is straightforward to verify that the conditions of \cref{thm:1_gen} now hold. Next, we choose $a_3 = 10$ and $w_0=2$ to have $f'''(w_0)>0$, and we choose $a_2=-50$ to have $f''(w_0)<0$ and $g_2=-16a_5\delta^3-6a_4\delta^2+2a_2<0$. Then we compute the remaining parameters $(a_1,a,b,c)=(132,\frac{4\sqrt{6}}{3}, \frac{332}{3},\frac{11}{6})$.
We can see the difference between $g$ and $f$ in \cref{fig:ex2fg} and the tendency of $f'''$ in \cref{fig:ex2f3}.

\hfill  $\Diamond$
\end{example}
\begin{figure}[H]
  \centering
  \subfloat[$f(w)-g(w)$]{
  \includegraphics[width=0.4\textwidth]{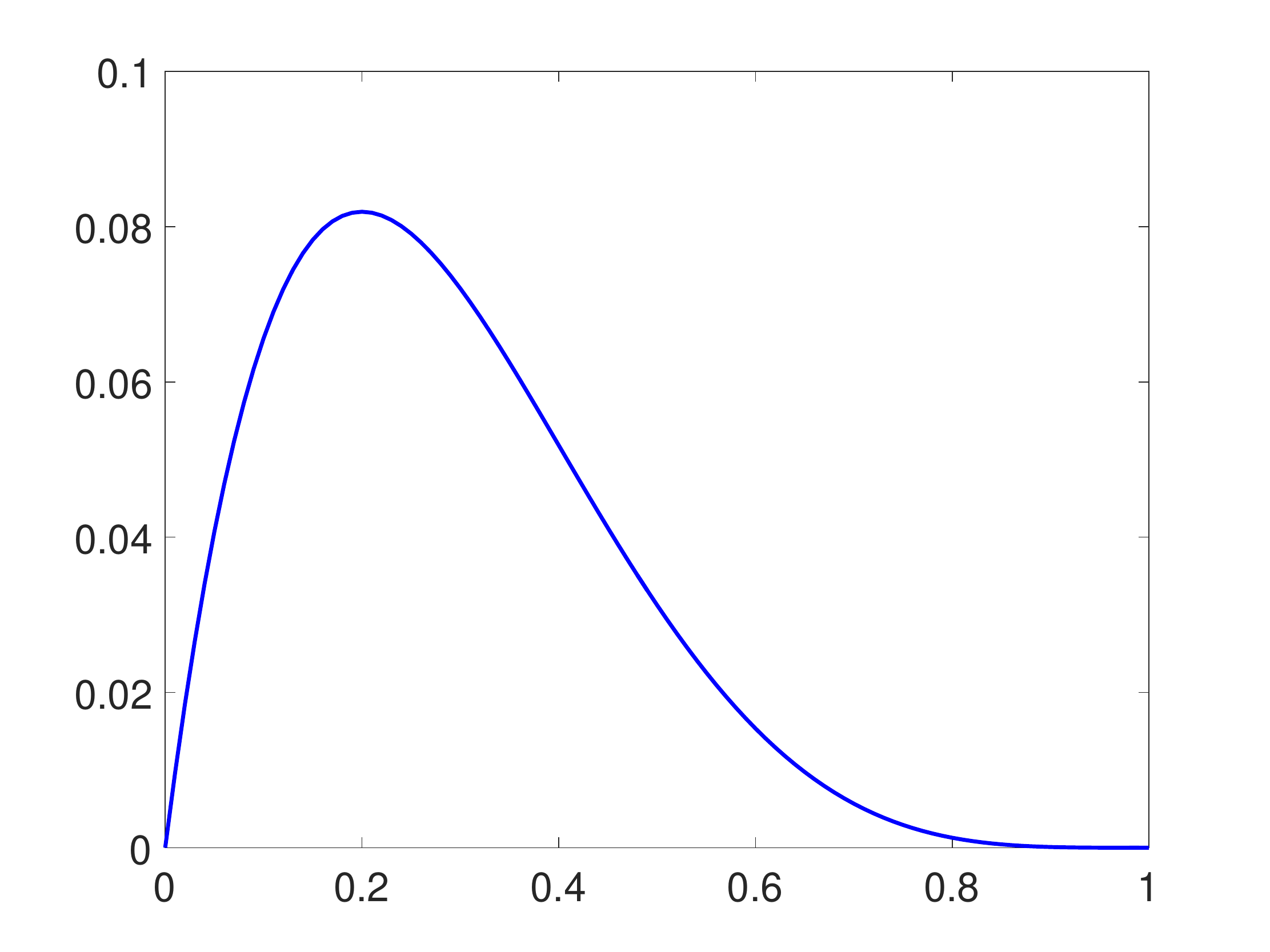}
  \label{fig:ex2fg}
  }
  \subfloat[$f'''(w)-g_3$]{
  \includegraphics[width=0.4\textwidth]{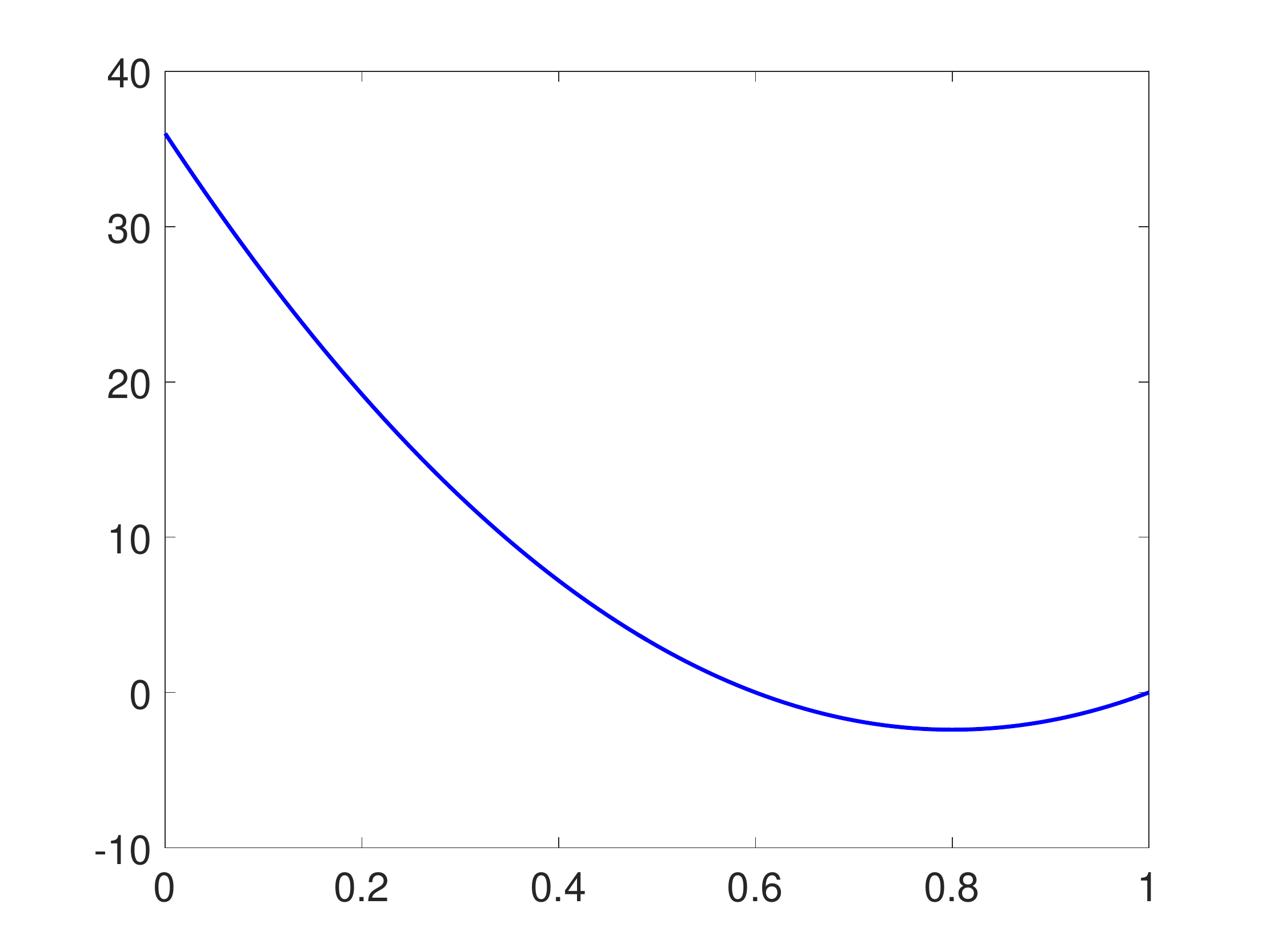}
  \label{fig:ex2f3}
  }
  \caption{$a_5=1$, $a_4=-4$, $a_3=10$, $w_0=2$, $a_2=-50$ in \cref{ex:fg}}
  \label{fig:ex2}
\end{figure}

In the next example we see yet another possibility for the conditions on $f'''$ under which $g$ is increasing and concave. Interestingly, the same function with a different choice of $\delta$ does not have $g \leq f$, but instead provides an example for \cref{prop:ub}, in which we get $g \geq f$.
\bigskip
% \textcolor{blue}{I like this example.  As you know, the choice of $\delta = 1$ does not allow for $f'''$ to decrease at the end of the interval $[0,\delta]$, so it doesn't really match with the case that $\delta = 5$ while isolating the failure of condition $\ref{cond5}$.  What happens with this function if we choose $\delta = 1+\epsilon$, so that $f(\delta) > g(\delta)$ still, but $f'''$ is increasing and then decreasong on $[0,\delta]$?  Or do we only get $g \leq f$ for $\delta$ past the root of $f''' - g'''$?}

% \textcolor{blue}{Also, is the case that $f'''$ is increasing and then decreasing on $[0,\delta]$ analogous to the case of \cref{thm:1_gen}?}

\begin{example}\label{ex:incdec}
Consider the function $f(w) = -(w+3)e^{-w}+3$, which has the following derivatives:
\begin{align*}
f'(w) &= (w+2)e^{-w};\\
f''(w) &= -(w+1)e^{-w};\\
f'''(w) &= we^{-w};\\
f^{(4)}(w) &= -(w-1)e^{-w}.
\end{align*}
Also $f(0)=0$, and $f$ is increasing, concave and thrice differentiable on $[0,+\infty)$.
% \textcolor{blue}{Does $f$ satisfy conditions \ref{cond1}, \ref{cond2}, and \ref{cond3} of \cref{thm:1}?}
Moreover, $f'''(w)$ is increasing on $[0,1]$ and then decreasing on $[1,+\infty)$.

For $\delta=5$,  $g \leq f$ on their common domain (see \cref{fig:delta5}), even though this function satisfies neither the conditions in \cref{thm:1} nor \cref{thm:1_gen}. Instead, $f'''$ is increasing and then decreasing on $[0,\delta]$.

For $\delta=1$, $f'''$ is increasing on $[0,\delta]$. We conclude that $g$ upper-bounds $f$ (see \cref{fig:delta1}) via \cref{prop:ub}.

\begin{figure}[H]
  \centering
  \subfloat[$f(w)-g(w)$]{
  \includegraphics[width=0.4\textwidth]{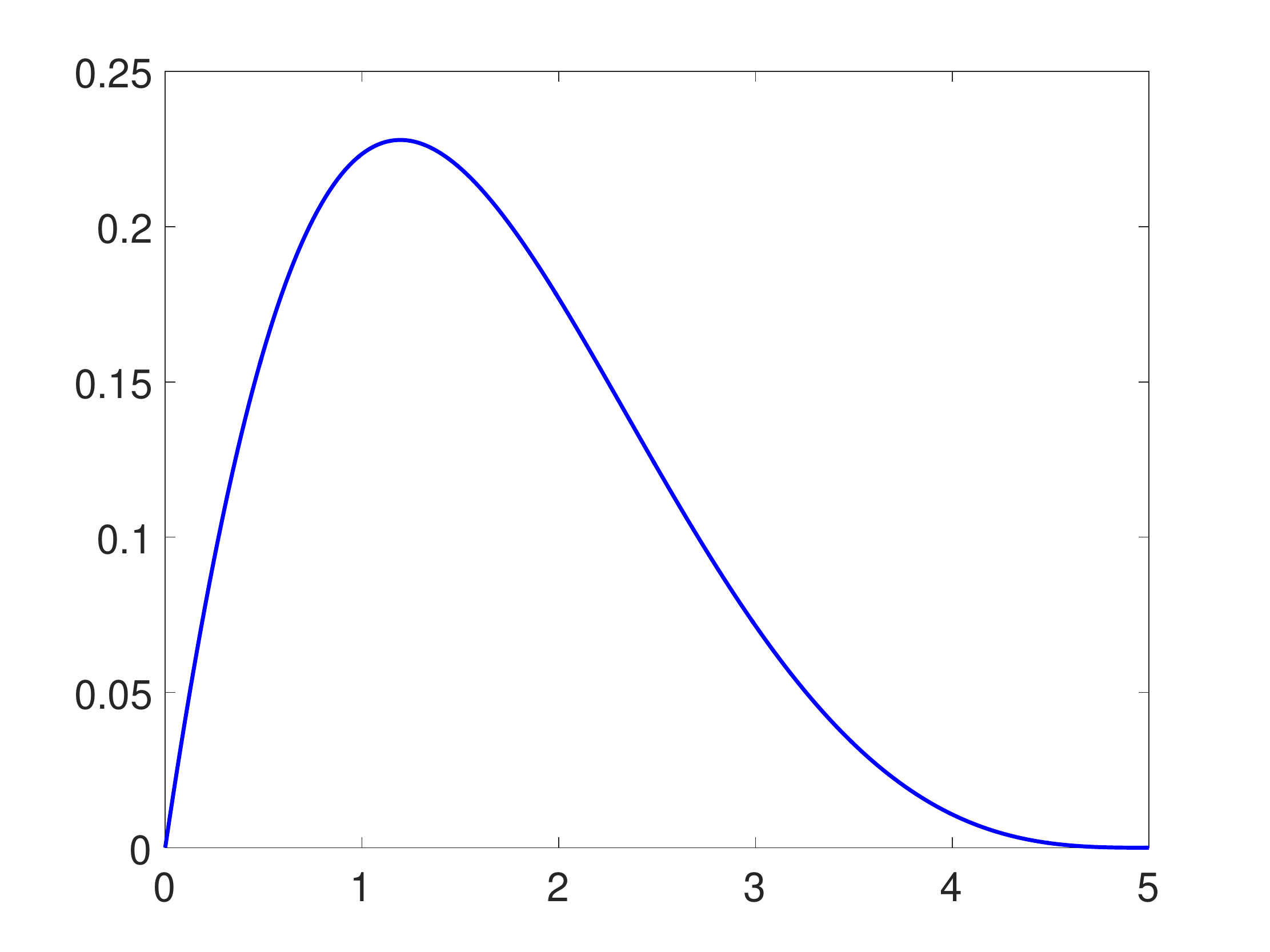}
  \label{fig:delta5}
  }
  \subfloat[$f'''(w)-g_3$]{
  \includegraphics[width=0.4\textwidth]{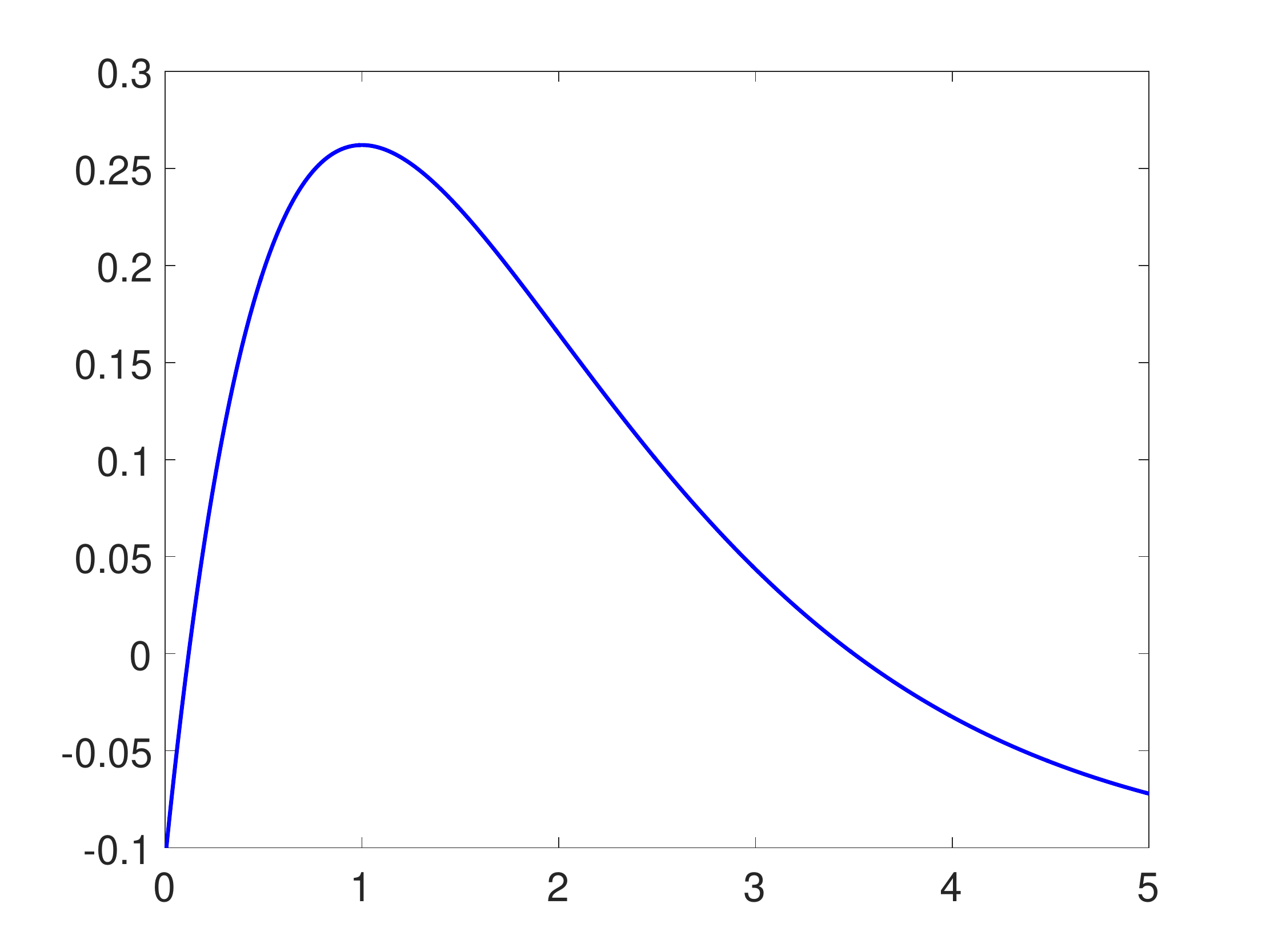}
  \label{fig:delta5f3}
  }
  \caption{$\delta = 5$, $g \leq f$}
  \label{fig:de5}
\end{figure}

\begin{figure}[H]
  \centering
  \subfloat[$f(w)-g(w)$]{
  \includegraphics[width=0.4\textwidth]{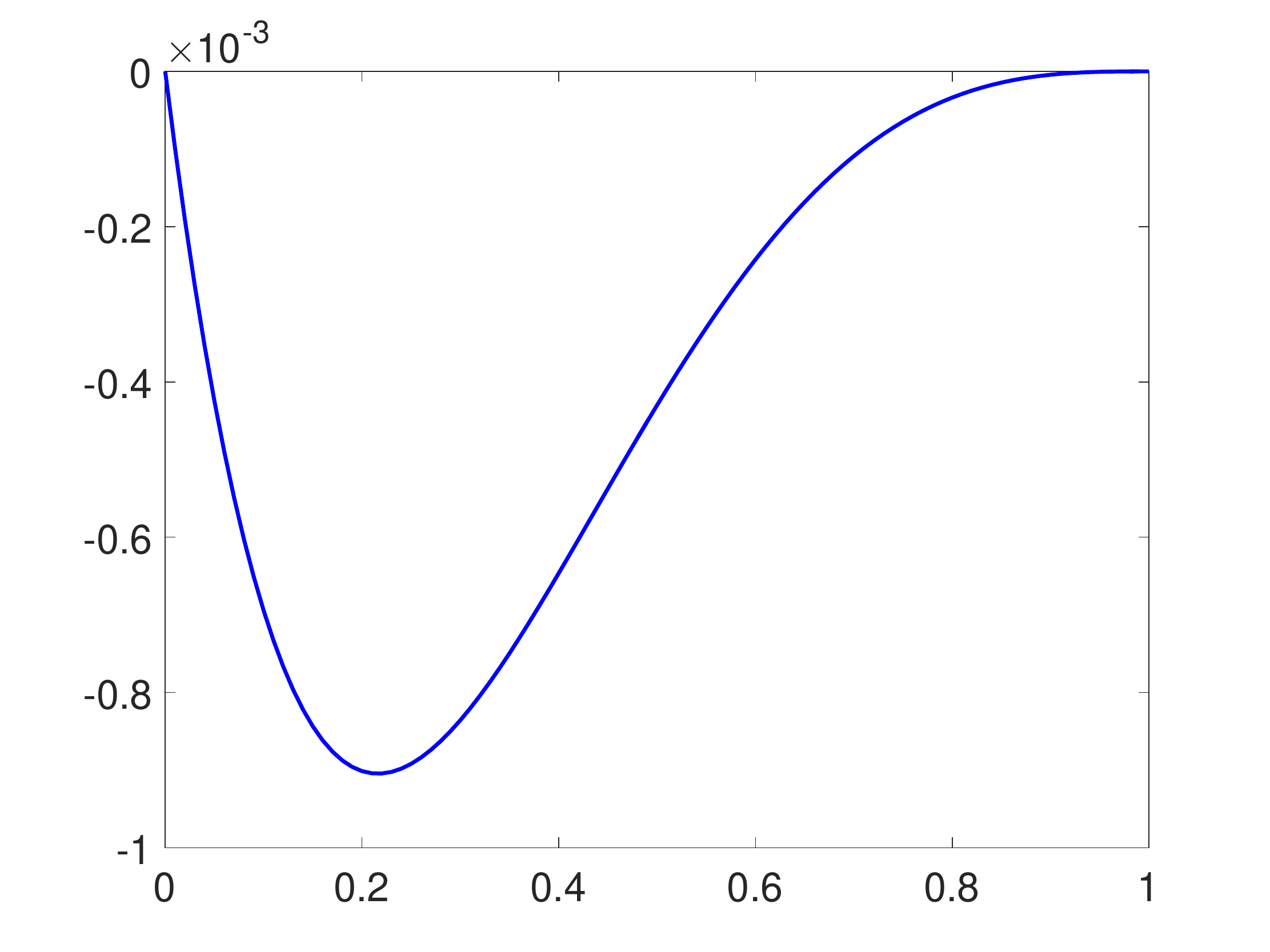}
  \label{fig:delta1}
  }
  \subfloat[$f'''(w)-g'''(0)$]{
  \includegraphics[width=0.4\textwidth]{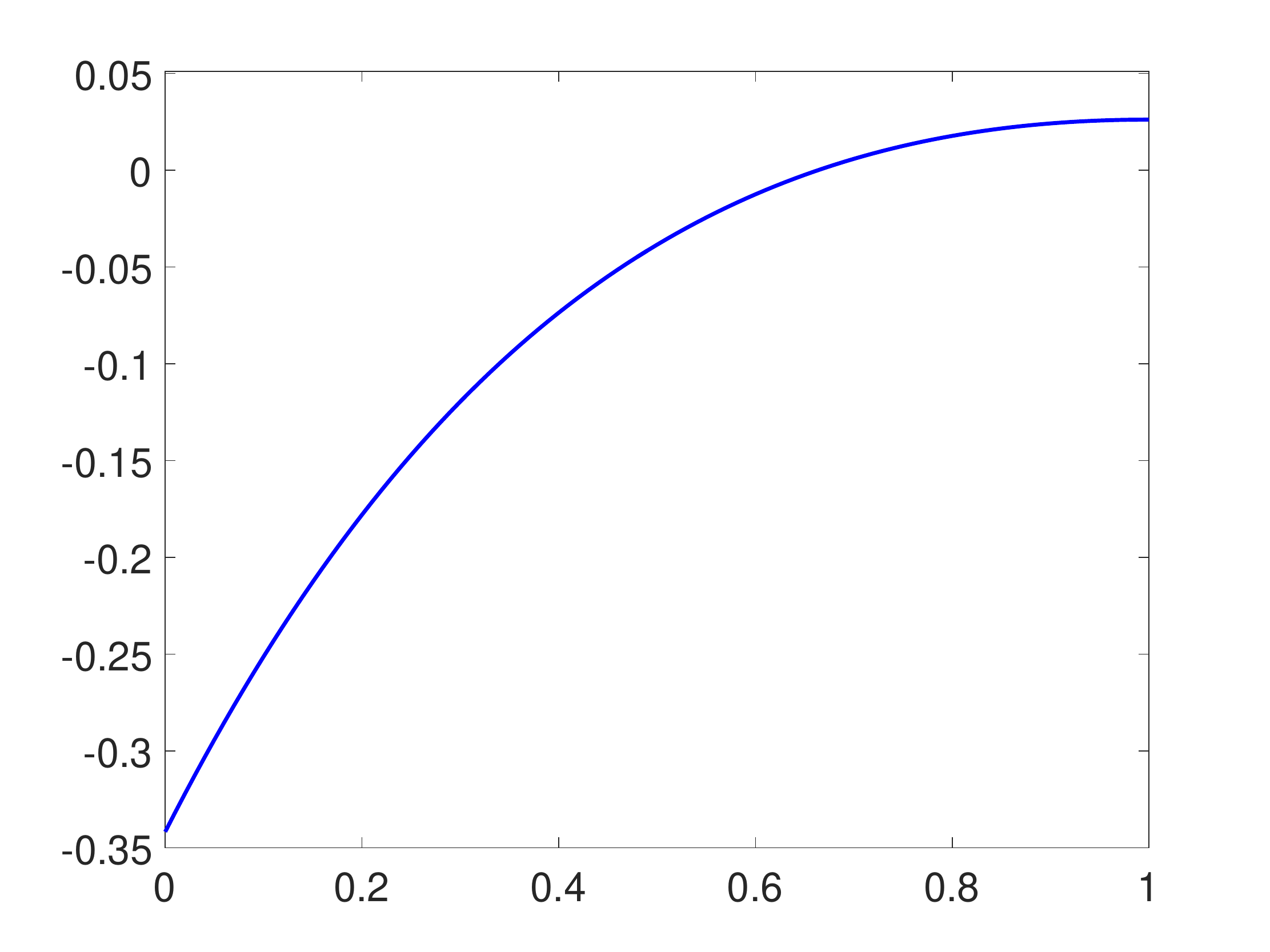}
  \label{fig:delta1f3}
  }
  \caption{$\delta = 1: g \ge f$}
  \label{fig:de1}
\end{figure}
If we add a small positive multiple of the square root function $\sqrt{w}$ to $f$, then we can get other possibilities for the tendency of $f'''$. For example, for $\delta=5$, $\epsilon=5\times 10^{-5}$,  $f(w)+\epsilon\sqrt{w}$ satisfies $g\le f$, while $f'''$ is decreasing, then increasing, then decreasing again.
\phantom{a}\hfill  $\Diamond$
\end{example}

\subsection{Role of the increasing and concave properties}
 \label{roleofTdelta}
\cref{thm:1} suggests that there could be $f$ that are not increasing and concave for which the $\delta$-smoothing of $f$ is a lower bound for $f$. The following simple example realizes such a scenario.

\begin{example}\label{ex:otherf}
Let $\delta=1$, $f(w)=-w^4+6w^2-8w$ is decreasing and convex, and satisfies $f^{(4)}(w)=-1<0$, which implies that $f'''$ is decreasing on $(0,\delta]$, and \cref{thm:1} holds; $g$ is a lower bound for $f$.
\end{example}

Returning to root functions and their relatives,  it would be nice if we could count on the lower bounding $g$ to be increasing and concave whenever \cref{thm:1} applies to an increasing and concave  $f$. In \S\ref{sec:tendency}, we gave a necessary and sufficient condition
\eqref{eq:TSdelta} ($g_2 \leq 0$) for $g$ to be increasing and concave.
So we have the natural question: do we automatically satisfy \eqref{eq:TSdelta} when \cref{thm:1} applies to functions that are increasing and concave?
Unfortunately, the answer to this question is ``no'', as demonstrated by \cref{ex:gg}.

To motivate the development of \cref{ex:gg}, we note that when $f'''(\delta)\ge0$, property \ref{p4} of \cref{prop:thm1}
implies that $g_3 \geq 0$, as well.
So to get an example of a function that satisfies the hypotheses of \cref{thm:1} but $g$ is not increasing and concave, we need to have $f'''(\delta) < g_3 < 0$.
We impose the required properties in the context of the general form presented in \cref{ex:fg}.

%\begin{example} \label{ex:fg}
%Consider the function
%\begin{displaymath}
%f(w) := \left \{
%\begin{array}{ll}
%a_5w^5+a_4w^4+a_3w^3+a_2w^2+a_1w, & 0\leq w\leq w_0;\\
%a\sqrt{w-c}+b,&w > w_0.
%\end{array}
%\right .
%\end{displaymath}
%For $\delta\le w_0$, we have the associated function $g(w) = g_1 w +  \frac{1}{2}g_2 w^2 +  \frac{1}{6}g_3 w^3$, where
%\begin{align*}
%g_1 &= 3a_5\delta^4+a_4\delta^3+a_1,\\
%g_2 &= -16a_5\delta^3-6a_4\delta^2+2a_2,\\
%g_3 &= 36a_5\delta^2+18a_4\delta+6a_3.
%\end{align*}
%The condition of \cref{thm:1} can be satisfied by setting $f^{(4)}(w)=120a_5w+24a_4\le0$ on $(0,\delta]$, i.e., $a_4\le0,~5a_5\delta+a_4\le0$. After fixing the values of the parameters $\delta$, $w_0$, $a_2$, $a_3$, $a_4$, and $a_5$, we ensure continuity and thrice  differentiablity of $f$ at $w_0$ by calculating the remaining parameters as follows:
%\begin{align*}
%c &= w_0 + \frac{3f''(w_0)}{2f'''(w_0)},\\
%a_1 &= -2f''(w_0)(w_0-c) - (5a_5w_0^4+4a_4w_0^3+3a_3w_0^2+2a_2w_0),\\
%a &= \frac{8f'''(w_0)(w_0-c)^{\frac{5}{2}}}{3},\\
%%b &= (a_5w_0^5+a_4w_0^4+a_3w_0^3+a_2w_0^2+a_1) - a\sqrt{w_0-c}.
%b &= f(w_0) - a\sqrt{w_0-c}.
%\end{align*}
%By the concavity of $f$, and because the second and third derivatives of $a\sqrt{w-c}+b$ have opposite signs, $f''(w_0)$ and $f'''(w_0)$ must be negative and positive, respectively.
%\hfill  $\Diamond$
%\end{example}

\begin{example}\label{ex:gg}
Consider the function $f$ described in \cref{ex:fg}.  We seek parameters of $f$ for which $g_3 < 0$, and all conditions of \cref{thm:1} are satisfied.
%The condition of $f'''$ decreasing can be satisfied by setting $f^{(4)}(w)=120a_5w+24a_4\le0$ on $(0,\delta]$, i.e., $a_4\le0,~5a_5\delta+a_4\le0$.
  For $0 \leq w \leq w_0$, we have $f'''(w) = 60a_5 w^2 + 24a_4 w + 6a_3$ and $f''''(w) = 120a_5 w + 24a_4$.  In order to have $f'''(w)$ decreasing on $[0,\delta]$ and $f'''(\delta)<0<f'''(w_0)$ for $\delta < w_0$, we require $a_5>0$, $a_4 <0$ and $5a_5\delta+a_4\le0$.

For example, choose $\delta=1$, $a_3=0$, $a_4=-5$, $a_5=1$. It is straightforward to verify that the conditions of \cref{thm:1} now hold. Next, we choose $w_0=3$ to have $f'''(w_0)>0$, and we choose $a_2=-3$ to have $f''(w_0)<0$. By calculating $a_1,~a,~b,~c$, we get an example with $g_3=-54<0$.

Note that $g_2=8>0$, so the associated function $g$ is not increasing and concave.

\cref{fig:ex1} shows the difference between $f$ and $g$, and also the tendency of $f'''$. \cref{fig:fgconvex} shows the derivative and second derivative of $f(w)$ and $g(w)$, respectively, which demonstrates that $f(w)$ is increasing and concave, while $g(w)$ is not.  \hfill  $\Diamond$
\end{example}

%Example \ref{ex:gg} implies that just because our function $f$ is increasing, and concave and our $\delta$-smoothing $g$ successfully lower-bounds $f$, we cannot automatically assume that $g$ also following the tendency of  $f$.
We encapsulate the result implied by \cref{ex:gg} as follows:

\begin{observation}%\label{ex:gpppneg}
\label{pop:gg}
For an increasing concave function $f$, the hypotheses of \cref{thm:1} do not imply that the smoothing $g$ is increasing and concave, i.e., \eqref{eq:TSdelta} is not implied by the hypotheses of \cref{thm:1},
even for increasing concave $f$.
\end{observation}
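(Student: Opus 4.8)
The plan is to prove this negative result by exhibiting a single witness, namely the function constructed in \cref{ex:gg}, and verifying that it simultaneously meets every hypothesis of \cref{thm:1} while violating \eqref{eq:TSdelta}. First I would isolate the algebraic target that any such witness must hit. Since $f$ is to be increasing and concave, $f''(\delta)\le 0$; using $f''(\delta)=g''(\delta)=g_2+g_3\delta$, a value $g_2>0$ forces $g_3<0$. Property \ref{p4} of \cref{prop:thm1} then gives $f'''(\delta)<g_3<0$. So the construction must produce a concave $f$ with $f'''$ decreasing on $(0,\delta]$ and, in addition, strictly negative third derivative at $\delta$; this is exactly the obstruction identified in the paragraph preceding \cref{ex:gg}.

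Next I would build $f$ using the template of \cref{ex:fg}: a quintic $a_5w^5+\cdots+a_1w$ on $[0,w_0]$ glued to a shifted square root $a\sqrt{w-c}+b$ on $(w_0,+\infty)$. On the quintic piece $f'''(w)=60a_5w^2+24a_4w+6a_3$, so to make $f'''$ decreasing on $[0,\delta]$ with $f'''(\delta)<0<f'''(w_0)$ I would impose $a_5>0$, $a_4<0$, and $5a_5\delta+a_4\le 0$. The matching formulas in \cref{ex:fg} for $c,a_1,a,b$ then enforce continuity and thrice differentiability at $w_0$ and keep the square-root tail increasing, concave, with decreasing $f'''$, so that $f$ inherits these properties globally.

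I would then instantiate the free parameters concretely --- $\delta=1$, $a_5=1$, $a_4=-5$, $a_3=0$, $w_0=3$, $a_2=-3$, with $a_1,a,b,c$ computed from the formulas --- and carry out the verifications: (a) $f(0)=0$ and $f$ is increasing, concave, and thrice differentiable on $[0,+\infty)$; (b) $f'''$ is decreasing on $(0,\delta]$, so \cref{thm:1} applies; and (c) the coefficient formulas give $g_3=36a_5\delta^2+18a_4\delta+6a_3=-54<0$ and $g_2=-16a_5\delta^3-6a_4\delta^2+2a_2=8>0$. Since $g_2>0$, condition \eqref{eq:TSdelta} fails, and by \cref{thm:icexact} the smoothing $g$ is not concave, even though $f$ is increasing, concave, and satisfies all hypotheses of \cref{thm:1}.

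The main obstacle is the construction itself rather than any single calculation: one must arrange $f'''(\delta)<0$ (the mechanism that drives $g_3<0$ and hence permits $g_2>0$) while simultaneously preserving global concavity of $f$ and monotonicity of $f'''$, constraints that pull in competing directions across the glue point $w_0$. The value of the \cref{ex:fg} template is that the gluing conditions are baked in, reducing the whole problem to selecting finitely many parameters subject to a short list of sign inequalities and then reading off $g_2$ and $g_3$.
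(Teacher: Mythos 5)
Your proposal is correct and is essentially the paper's own argument: the paper proves the observation exactly by constructing \cref{ex:gg} from the \cref{ex:fg} template with the same parameters ($\delta=1$, $a_5=1$, $a_4=-5$, $a_3=0$, $w_0=3$, $a_2=-3$), obtaining $g_3=-54<0$ and $g_2=8>0$, so that \eqref{eq:TSdelta} fails while the hypotheses of \cref{thm:1} hold. Your preliminary reasoning that concavity at $\delta$ plus property \ref{p4} forces $f'''(\delta)<g_3<0$ for any witness is likewise the paper's own motivating paragraph preceding \cref{ex:gg}.
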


\begin{figure}[H]
  \centering
  \subfloat[$f(w)-g(w)$]{
  \includegraphics[width=0.4\textwidth]{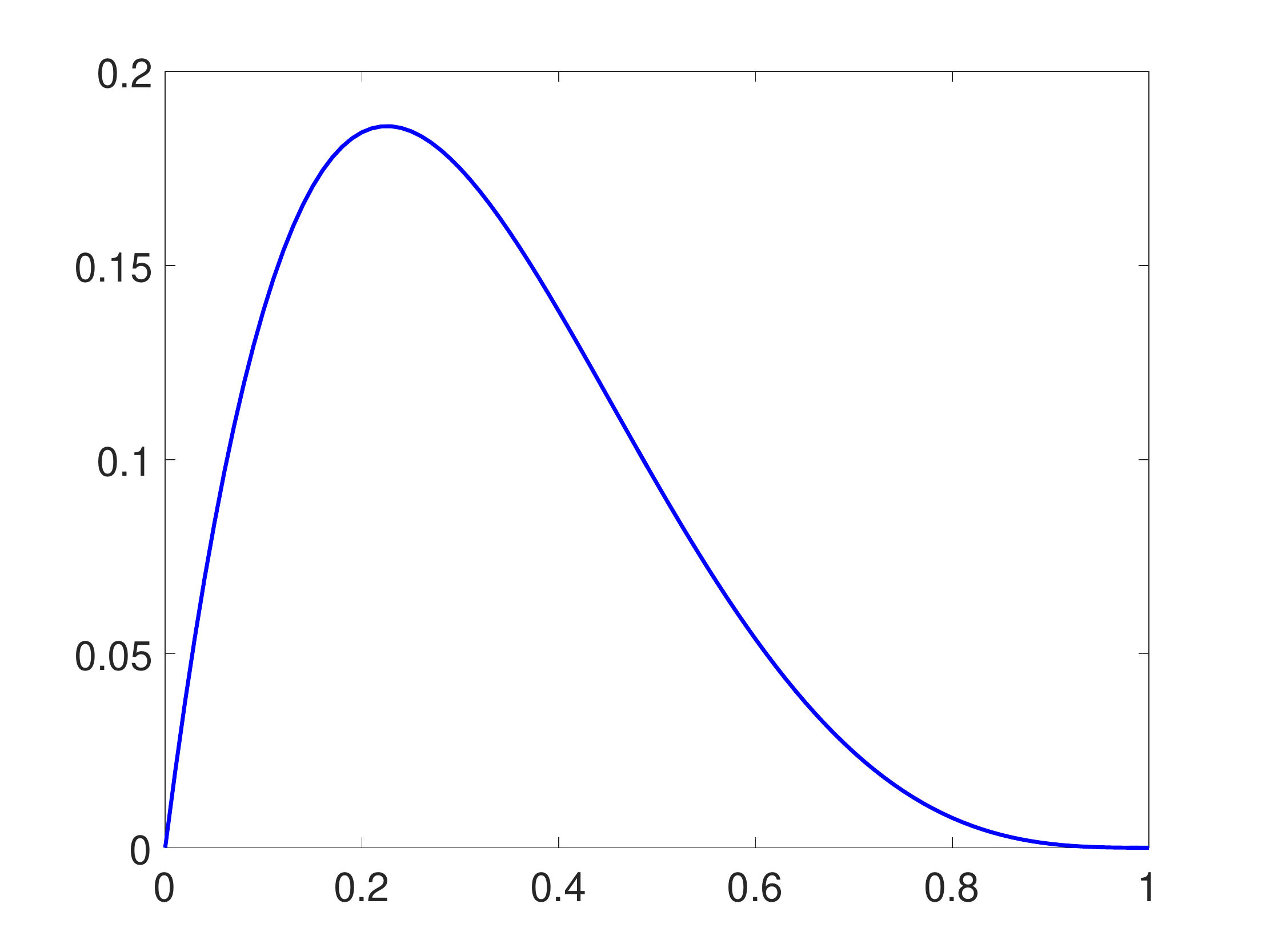}
  \label{fig:ex1fg}
  }
  \subfloat[$f'''(w)-g_3$]{
  \includegraphics[width=0.4\textwidth]{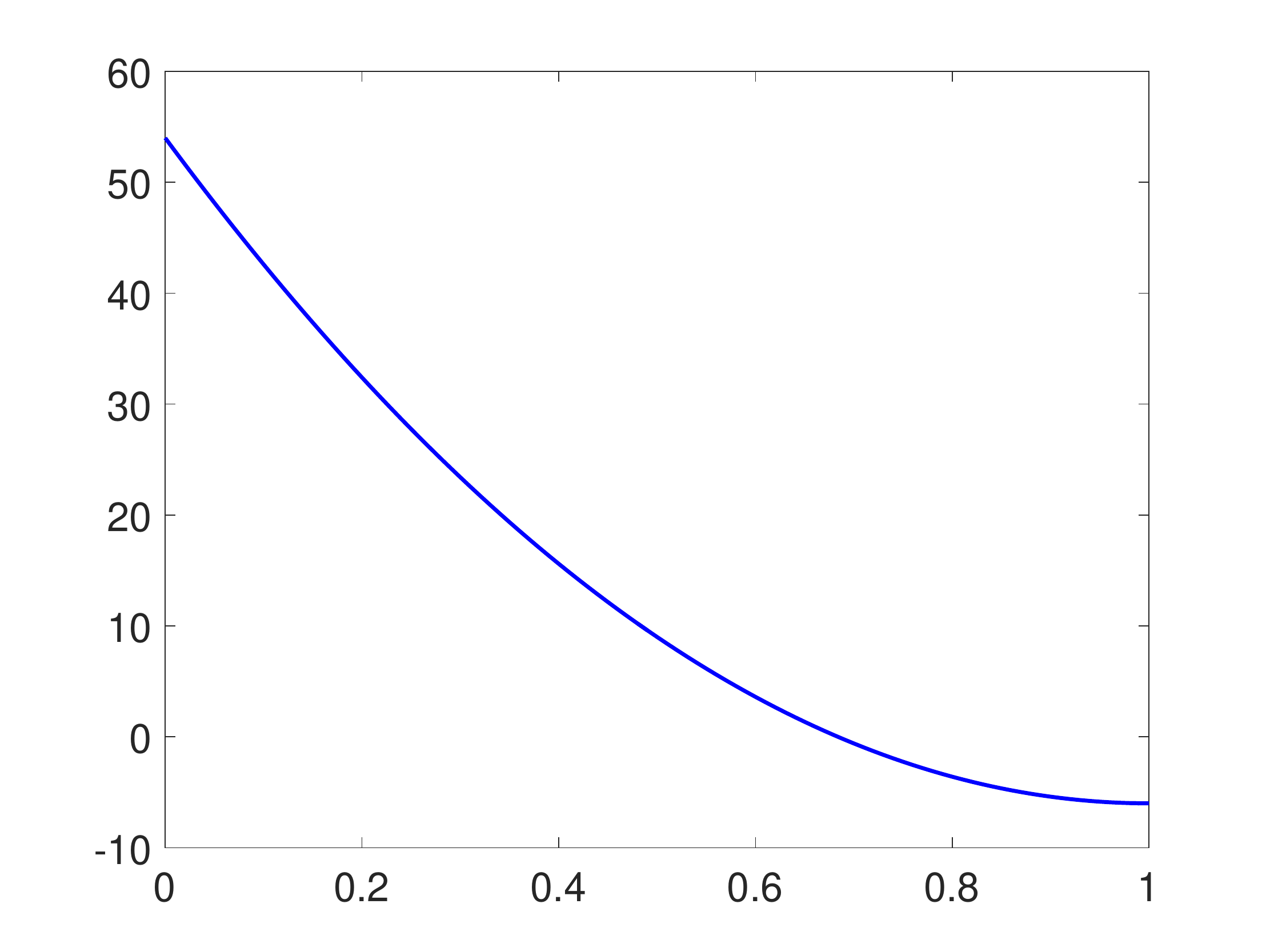}
  \label{fig:ex1f3}
  }
  \caption{$a_5=1$, $a_4=-5$, $a_3=0$, $w_0=3$, $a_2=-3$ in \cref{ex:fg}}
  \label{fig:ex1}
\end{figure}

\begin{figure}[H]
  \centering
  \subfloat[$f'(w),g'(w)$]{
  \includegraphics[width=0.4\textwidth]{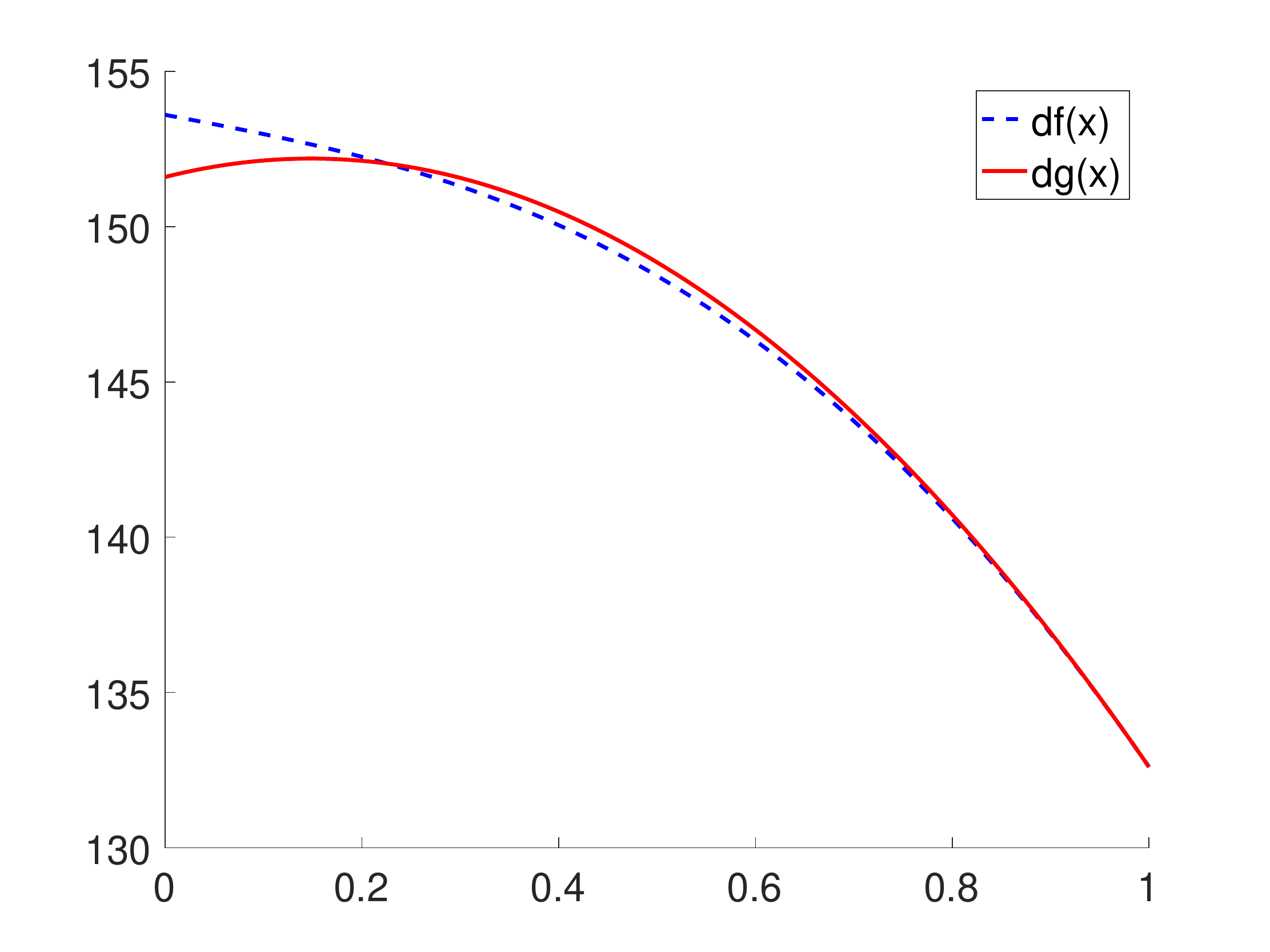}
  \label{fig:dfgconvex}
  }
  \subfloat[$f''(w), g''(w)$]{
  \includegraphics[width=0.4\textwidth]{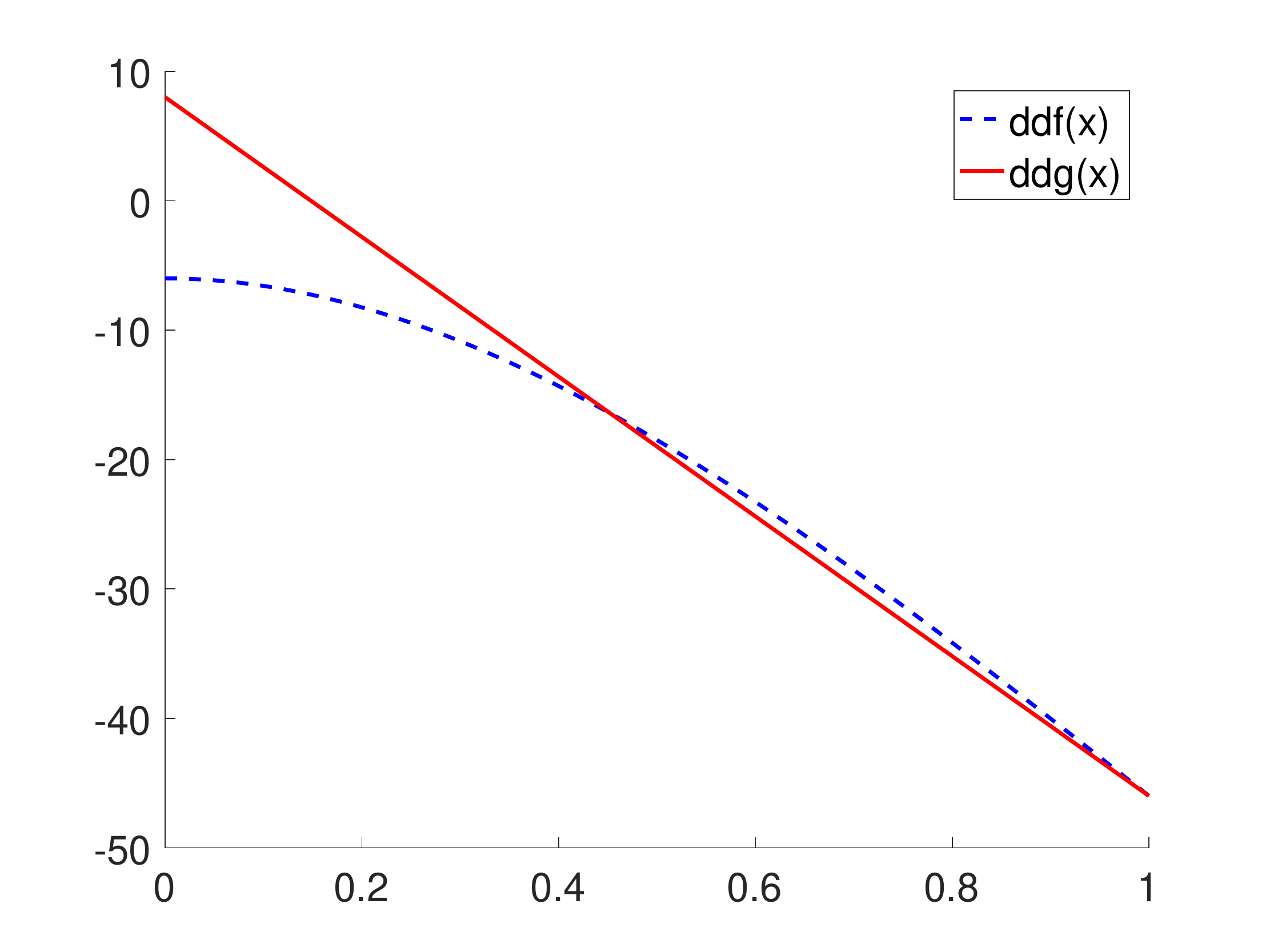}
  \label{fig:ddfgconvex}
  }
  \caption{$a_5=1$, $a_4=-5$, $a_3=0$, $w_0=3$, $a_2=-3$ in \cref{ex:fg}}
  \label{fig:fgconvex}
\end{figure}

\subsection{Monotonicity of \texorpdfstring{$\norm{f-g}_{\infty}$ in $\delta$}{||f-g|| in delta}}\label{fgmonotone}

In \S\ref{monotone}, we demonstrated that the derivative of $g$ at
zero is \emph{decreasing} in $\delta$,
when $f'''$ is decreasing. This is useful for calculating the least value of $\delta$ to obtain a target value for $g'(0)$. In this subsection,
we demonstrate that the worst-case  error of $g$ as an
approximation of $f$ is \emph{increasing} in $\delta$, again when $f'''$ is decreasing. This is useful for calculating the greatest value of $\delta$ to obtain a target value for the worst-case  error of $g$ as an
approximation of $f$. Of course it can be that tolerances for
 the derivative of $g$ at zero and for the worst-case  error of $g$ as an
approximation of $f$ can  be incompatible (i.e., no valid choice of $\delta$ satisfying both). Before continuing,
we note that (i) \cite[Section 5]{lee2016virtuous} obtained results on the average performance of $g$, when $f$ is a root function, and (ii) \cite[Theorem 1, part 6]{DFLV2015},\cite{DFLV2014} obtained  results on the worst-case performance of $g$, when $f$ is the square-root function.

Formally now, we define $F(w):=f(w)-g(w)$, and
$$\norm{f-g}_{\infty} := \max_{w\in [0,\delta]}\abs{f(w)-g(w)} = \max_{w\in[0,\delta]}\abs{F(w)}.$$
Note that $g$ and its coefficients $g_1,g_2,g_3$  are functions of $\delta$, and so $\norm{f-g}_{\infty}$ is also a function of $\delta\in (0,U)$.
%We have the following theorem to explore the tendency of
%$\norm{f-g}_{\infty}$ with respect to $\delta$.

\begin{theorem}\label{thm:fgmonotone}
Let $f$ be a univariate function having a domain  $I:=[0,U)$, where $U \in \{ w\in\mathbb{R} ~:~ w > 0\} \cup \{+\infty\}$.  Assume that $f$ satisfies the minimal $\delta$-smoothing requirements for all $\delta>0$ in the domain of $f$.
Suppose further that

\begin{itemize}
  \item $f$ is continuous on $[0,U)$ and thrice differentiable on $(0,U)$;
  \item $f'''$ is decreasing on $(0,U)$.
\end{itemize}
Then $\norm{f-g}_{\infty}$ is increasing on $(0,U)$.
\end{theorem}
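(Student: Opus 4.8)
The plan is to reduce the statement to a \emph{pointwise} monotonicity of $F:=f-g$ in $\delta$ and then upgrade it to monotonicity of the maximum by a short comparison argument that sidesteps the moving domain $[0,\delta]$. First I would record that, since $f'''$ is decreasing on $(0,U)$, the hypotheses of \cref{thm:1} hold for every $\delta\in(0,U)$, so $F(w;\delta)=f(w)-g(w)>0$ for $w\in(0,\delta)$ while $F(0;\delta)=F(\delta;\delta)=0$. Hence the absolute value in the definition of $\norm{f-g}_\infty$ is redundant and
\[
M(\delta) := \norm{f-g}_\infty = \max_{w\in[0,\delta]} F(w;\delta)
\]
is a strictly positive number attained at some interior point $w^\ast(\delta)\in(0,\delta)$ (the positivity of $F$ in the interior, together with its vanishing at the endpoints, forces the maximizer strictly inside).

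Next I would compute how $g$ varies with $\delta$ at a fixed $w$. Differentiating the coefficient formulas for $g_1,g_2,g_3$ with respect to $\delta$ (the computation for $g_1$ already appears in the proof of \cref{thm:monotone}), everything collapses onto the single quantity $D(\delta):=f'''(\delta)-g_3(\delta)$; explicitly
\[
g_1'(\delta)=\tfrac{\delta}{2}D(\delta),\qquad g_2'(\delta)=-2D(\delta),\qquad g_3'(\delta)=\tfrac{3}{\delta}D(\delta).
\]
Substituting into $g(w;\delta)=g_1 w+\tfrac12 g_2 w^2+\tfrac16 g_3 w^3$ and simplifying, the three terms combine into a perfect square,
\[
\frac{\partial}{\partial\delta}\,g(w;\delta)=\frac{D(\delta)}{2\delta}\,w(w-\delta)^2,
\]
so that $\partial_\delta F(w;\delta)=-\tfrac{D(\delta)}{2\delta}\,w(w-\delta)^2$. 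By property~\ref{p4} of \cref{prop:thm1}, $D(\delta)=f'''(\delta)-g_3(\delta)<0$ for every $\delta\in(0,U)$; hence $\partial_\delta F(w;\delta)>0$ for all $0<w<\delta$. In other words, for each fixed $w$ the error $F(w;\delta)$ is strictly increasing in $\delta$ on $(w,U)$.

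Finally I would combine the two facts. Fix $\delta_1<\delta_2$ in $(0,U)$ and let $w^\ast:=w^\ast(\delta_1)\in(0,\delta_1)$ attain $M(\delta_1)$. Since $w^\ast<\delta_1<\delta_2$ and $F(w^\ast;\cdot)$ is strictly increasing on $(w^\ast,U)$, we get $F(w^\ast;\delta_2)>F(w^\ast;\delta_1)=M(\delta_1)$; and since $w^\ast\in[0,\delta_2]$ is feasible for the maximization defining $M(\delta_2)$, we get $M(\delta_2)\ge F(w^\ast;\delta_2)$. Chaining these yields $M(\delta_2)>M(\delta_1)$, which is the claim.

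I expect the main obstacle to be the second step: carrying out the differentiation of the $g_i$ and recognizing the collapse to $\tfrac{D}{2\delta}\,w(w-\delta)^2$, together with pinning down the sign of $D$ through \cref{prop:thm1}. The comparison argument in the last paragraph is precisely what lets me avoid differentiating $M$ directly --- it dodges the envelope/Danskin technicalities caused by the maximizer $w^\ast(\delta)$ moving and the domain $[0,\delta]$ depending on $\delta$; the only structural input needed is that the maximizer lies strictly inside $(0,\delta)$, which the lower-bounding conclusion of \cref{thm:1} guarantees.
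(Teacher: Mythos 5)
Your proposal is correct, and it shares its computational engine with the paper's proof while replacing the paper's analytic machinery for handling the maximum with a more elementary argument. The identical core: both proofs differentiate the coefficients $g_1,g_2,g_3$ in $\delta$, collapse everything onto $D(\delta):=f'''(\delta)-g_3(\delta)$ via $g_1'(\delta)=\tfrac{\delta}{2}D$, $g_2'(\delta)=-2D$, $g_3'(\delta)=\tfrac{3}{\delta}D$, recognize that $\partial_\delta g(w;\delta)=\tfrac{D(\delta)}{2\delta}\,w(w-\delta)^2$, and get $D<0$ from property \ref{p4} of \cref{prop:thm1}; the paper's displayed quantity $-\frac{w_2(w_2-\delta)^2}{2\delta}\left(f'''(\delta)-g_3(\delta)\right)$ is exactly your formula evaluated at the maximizer. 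Where you genuinely diverge is in upgrading pointwise monotonicity of $F(w;\cdot)$ to monotonicity of $\norm{f-g}_\infty$: the paper tracks the moving maximizer $w_2(\delta)$, which forces it to import the characterization of $F'$ and $F''$ from \cref{rk:w} to get $F'(w_2)=0$ and $F''(w_2)<0$, invoke the implicit function theorem to make $w_2(\delta)$ differentiable, and then differentiate $F(w_2(\delta);\delta)$, with the $\frac{dw_2}{d\delta}$ terms vanishing by first-order optimality --- precisely the envelope argument you said you wanted to dodge. Your two-point comparison, $M(\delta_2)\ge F(w^\ast;\delta_2)>F(w^\ast;\delta_1)=M(\delta_1)$, needs only that the maximizer of $F(\cdot;\delta_1)$ lies in the open interval $(0,\delta_1)$, which follows from the positivity given by \cref{thm:1} together with $F(0)=F(\delta_1)=0$ and continuity; no second-order information, no uniqueness of the maximizer, and no implicit function theorem are required, and you still obtain the strict inequality that the paper's definition of ``increasing'' demands. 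The only thing the paper's route buys that yours does not is the incidental fact that the maximizer is itself a differentiable function of $\delta$, which is not needed for the statement.
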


\begin{proof}
If $f'''$ is decreasing, then by \cref{thm:1}, $F(w):=f(w)-g(w)>0$ on $(0,\delta)$. Define
\[
w_2 :=  \mbox{argmax} \{F(w) ~:~  w\in[0,\delta]\}.
\]
Then $\norm{f-g}_{\infty} =\max_{w\in [0,\delta]}F(w) = F(w_2)$.
As mentioned in \cref{rk:w}, we can use the same proof technique from \cref{thm:1_gen} to prove now that $F'(w_2)=0$ and $F''(w_2)<0$.\par
Clearly $w_2$ is a function of $\delta$, and we are going to demonstrate that $w_2$ is actually a differentiable function with respect to $\delta$. For any $\delta\in(0,U)$, let $G(x,y):= f'(y)-\frac{g_3(x)}{2}y^2 -g_2(x)y -g_1(x)$. We have $G(\delta,w_2)=F'(w_2)=0$, and
\[
\frac{\partial G(\delta,w_2)}{\partial x} = f''(w_2) - g''(w_2) =F''(w_2)<0.
\]
By the implicit function theorem, there exists a unique differentiable function $y=w_2(x)$ such that $w_2(\delta)=w_2$ and $G(x,w_2(x))=0$ for $x\in N(\delta)$, where $N(\delta)$ is an open interval containing $\delta$.
\par
Therefore,
\begin{align*}
\frac{d F(w_2(\delta))}{d\delta}&=f'(w_2)\frac{d w_2(\delta)}{d\delta} - g'(w_2)\frac{d w_2(\delta)}{d\delta} -\frac{d g_3(\delta)}{d\delta}\frac{w_2(\delta)^3}{6} \\
&\phantom{==}-\frac{d g_2(\delta)}{d\delta}\frac{w_2(\delta)^2}{2} - \frac{d g_1(\delta)}{d\delta}w_2(\delta)\\
&= -\frac{d g_3(\delta)}{d\delta}\frac{w_2(\delta)^3}{6} -\frac{d g_2(\delta)}{d\delta}\frac{w_2(\delta)^2}{2} - \frac{d g_1(\delta)}{d\delta}w_2(\delta)\\
&=-\frac{w_2(\delta)(w_2(\delta)-\delta)^2}{2\delta}(f'''(\delta)-g_3(\delta))>0.
\end{align*}
The second equality follows from $F'(w_2)=0$, the third equality follows from the facts
\begin{equation*}
\frac{d g_1(\delta)}{d\delta} = \frac{\delta}{2}(f'''(\delta)-g_3(\delta)),~ \frac{d g_2(\delta)}{d\delta} =-2(f'''(\delta)-g_3(\delta)),~ \frac{d g_3(\delta)}{d\delta} =\frac{3}{\delta}(f'''(\delta)-g_3(\delta)),
\end{equation*}
and the last inequality follows from $w_2(\delta)>0$ and $f'''(\delta)-g_3(\delta)<0$ (by \cref{prop:thm1}). Thus $\norm{f-g}_{\infty} = F(w_2(\delta))$ is increasing on $(0,U)$.
\end{proof}

%%%%%%%%%%%%%%%%
\section{Comparison with shift smoothing}
\label{sec:beb}
We wish to compare our smoothing $g$ with the natural and frequently-used \emph{shift smoothing} (for root functions and their relatives): $h(w):=f(w+\lambda)-f(\lambda)$ for $w \in [0,+\infty)$, with $\lambda > 0$ chosen so that $h'(0)$ is numerically tolerable. When the function $f$ that we are considering is globally concave (and
because we assume that $f(0) =0$), $f$ is subadditive, and so $h$ is a lower bound for $f$ on its domain.

Clearly we have $g(0) = h(0) = 0$, and $h(w) \leq f(w) = g(w)$ for $w \geq \delta$, so we are interested in comparing $g$ and $h$ on the interval $(0,\delta)$.   Because $g$ is defined based on a choice of $\delta$, and $h$ is defined based on a choice of $\lambda$, a fair comparison is achieved by making these choices so that their derivatives at $0$ are the same. In this way, both smoothings of $f$ have the same maximum derivative
--- under the hypotheses of our result (\cref{thm:fg_gen}); that is, both smoothings have their derivatives maximized at zero where $f'$ is assumed to blow up, under the hypotheses of \cref{thm:fg_gen}, which imply the hypotheses of \cref{thm:monotone}.

% {\color{red} For this last point to be valid, we need to observe that the hypotheses of Thm 22 imply those of Thm 7}

In order to match derivatives at $0$, let $h'(0)=f'(\lambda)=g'(0)=g_1=3f(\delta)/\delta-2f'(\delta)+\delta f''(\delta)/2$.  Then we have
\begin{displaymath}
\hat{\lambda}:=(f')^{-1}\left(3f(\delta)/\delta-2f'(\delta)+\delta f''(\delta)/2\right),
\end{displaymath}
the value of $\lambda$, defined in terms of $\delta$, for which $h'(0)=g_1$.

In \cite{lee2016virtuous}, it is proved
that $h\le g$ for root functions $f=w^p$, with $p=1/q$ for integers $2\le q\le 10,000$. We generalize this result to a class of functions that shares many properties with root functions, and includes all root functions $f(w):=w^p$, for $0<p<1$.
Note that the conditions of \cref{thm:fg_gen} are more restrictive than those of \cref{thm:1}; here we require that $f'''$ is decreasing on $(0,2\delta)$, rather than $(0,\delta]$, and we require that $f'''(w) \geq 0$,  for $w \in (0, 2\delta)$.  This last condition implies that unlike \cref{thm:1} (see \cref{pop:gg}), \eqref{eq:TSdelta} is implied by the hypotheses of \cref{thm:fg_gen}.

\begin{theorem}
\label{thm:fg_gen}
 Let $f$ be a univariate function having a domain  $I:=[0,U)$, where $U \in \{ w\in\mathbb{R} ~:~ w > 0\} \cup \{+\infty\}$. Suppose  that $U\geq \delta/2>0$. Assume that $f$ satisfies the minimal $\delta$-smoothing requirements.
Assume further that
\begin{itemize}
	\item $f$ is continuous, increasing, and strictly concave on its domain;
	\item $f$ is thrice differentiable on $(0,U)$.
\end{itemize}
Moreover, suppose that
\smallskip
\begin{enumerate}[label=(\Roman*),leftmargin=5\parindent,itemsep=1ex]
\labitem{{\it (I)}}{cond1*} $f'''$ is decreasing on $(0,2\delta)$;
\labitem{{\it (II)}}{cond2}  $f'''(w) \geq 0$, \text{ for } $w \in (0, 2\delta)$.
\end{enumerate}
Then
\[
h(w):=f(w+\hat{\lambda})-f(\hat{\lambda})\le g(w),
\]
for $w$ in the domain of $f$,
where the shift constant $\hatl$ is chosen so that $h'(0)=g_1$; i.e.,
$\hatl=(f')^{-1}(g_1)$.
\end{theorem}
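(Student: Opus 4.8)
The plan is to reduce to the interval $[0,\delta]$ and then run the same derivative sign-chasing argument used in the proof of \cref{thm:1_gen}, applied to the difference $D:=g-h$. The two facts that must be supplied before that machine can be turned on are the analogues of properties \ref{p2} and \ref{p3} of \cref{prop:thm1}, but evaluated at the shift point $\hatl$ rather than in the limit $w\to 0^+$: namely $f''(\hatl)<g_2$ and $f'''(\hatl)>g_3$. The role formerly played by property \ref{p1} is here an exact equality, $f'(\hatl)=g_1$, built into the definition of $\hatl$.

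First I would dispose of $[\delta,U)$: there $g=f$, and since $f$ is concave with $f(0)=0$ it is subadditive, so $h(w)=f(w+\hatl)-f(\hatl)\le f(w)=g(w)$; thus it suffices to prove $D>0$ on $(0,\delta)$. Next I would record the structural facts. Because $f'''$ is decreasing on $(0,2\delta)\supseteq(0,\delta]$, \cref{prop:thm1} applies; its property \ref{p4} gives $f'''(\delta)<g_3$, and combined with \ref{cond2} this yields $g_3>0$. Writing $g''(w)=g_2+g_3w$ with $g''(\delta)=f''(\delta)$ gives $g_2=f''(\delta)-g_3\delta\le -g_3\delta<0$, so \eqref{eq:TSdelta} holds and, by \cref{thm:icexact}, $g$ is increasing and strictly concave. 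Strict concavity forces $g_1=g'(0)>g'(\delta)=f'(\delta)$, and since $f'$ is strictly decreasing with $f'(\hatl)=g_1$, we get $0<\hatl<\delta$; hence $w+\hatl$ ranges over $(0,2\delta)$ as $w$ runs over $[0,\delta]$, so \ref{cond1*} and \ref{cond2} may legitimately be applied to the derivatives of $h$.

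With this in hand, on $[0,\delta]$ we have $D(0)=0$ and $D'(0)=g_1-f'(\hatl)=0$, while $D'''(w)=g_3-f'''(w+\hatl)$ is increasing (as $f'''$ is decreasing on $(0,2\delta)$), $D'''(\delta)\ge g_3-f'''(\delta)>0$, $D''(\delta)=f''(\delta)-f''(\delta+\hatl)\le 0$ (as $f''$ is nondecreasing), and $D'(\delta)=f'(\delta)-f'(\delta+\hatl)>0$. For the inequality $f'''(\hatl)>g_3$, I would use that $F:=f-g$ satisfies $F(0)=F(\delta)=F'(\delta)=F''(\delta)=0$, so nested applications of Rolle's theorem produce $0<r_1<r_2<\xi<\delta$ with $f'(r_1)=g'(r_1)$, $f''(r_2)=g''(r_2)$, and $f'''(\xi)=g_3$; concavity of $g$ gives $g'(r_1)\le g'(0)=g_1=f'(\hatl)$, so $f'(r_1)\le f'(\hatl)$, whence $\hatl\le r_1<\xi$ and therefore $f'''(\hatl)>f'''(\xi)=g_3$. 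Once both $D''(0)>0$ and $D'''(0)<0$ are known, the sign chase is exactly as in \cref{thm:1_gen}: $D'''$ has a single zero ($-$ then $+$); $D''$ is convex, falling from $D''(0)>0$ to a negative minimum and rising to $D''(\delta)\le 0$, hence $+$ then $-$ with one sign change; $D'$ rises from $D'(0)=0$ and, being unimodal with $D'(\delta)>0$, stays positive on $(0,\delta]$; thus $D$ is increasing from $D(0)=0$ and $D>0$ on $(0,\delta)$.

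The crux, and where I expect the real work, is the remaining inequality $f''(\hatl)<g_2$ (that is, $D''(0)>0$). The clean route I would take combines the defining relation of $\hatl$ with the concavity of $f''$ (equivalently, $f'''$ decreasing). From $f'(\hatl)=g_1$ and $f'(\delta)=g'(\delta)=g_1+g_2\delta+\tfrac12 g_3\delta^2$ one gets the exact identity
\[
\int_{\hatl}^{\delta} f''(t)\,dt ~=~ f'(\delta)-f'(\hatl) ~=~ g_2\delta+\tfrac12 g_3\delta^2 ~=~ \delta\bigl(g_2+\tfrac12 g_3\delta\bigr).
\]
Suppose, for contradiction, that $f''(\hatl)\ge g_2$. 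Since $f''$ is concave on $[\hatl,\delta]$, it lies above its chord, so using $f''(\delta)=g_2+g_3\delta$,
\[
\int_{\hatl}^{\delta} f''(t)\,dt ~\ge~ \tfrac{\delta-\hatl}{2}\bigl(f''(\hatl)+f''(\delta)\bigr) ~\ge~ \tfrac{\delta-\hatl}{2}\bigl(g_2+f''(\delta)\bigr) ~=~ (\delta-\hatl)\bigl(g_2+\tfrac12 g_3\delta\bigr).
\]
Writing $c:=g_2+\tfrac12 g_3\delta$ and comparing with the identity gives $\delta c\ge(\delta-\hatl)c$, i.e. $\hatl\,c\ge 0$, hence $c\ge 0$; but $c=f''(\delta)-\tfrac12 g_3\delta<0$ because $f''(\delta)\le 0$ and $g_3>0$ --- a contradiction. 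Therefore $f''(\hatl)<g_2$, which supplies the missing ingredient and completes the argument.
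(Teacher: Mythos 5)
Your proof is correct, and its skeleton coincides with the paper's: reduce to $[0,\delta]$ via subadditivity, establish $g_3>0$, $g_2<0$ and $0<\hatl<\delta$, then sign-chase the difference $H:=g-h$ using $H(0)=H'(0)=0$, $H'(\delta)>0$, $H''(\delta)\le 0$, $H'''$ increasing with $H'''(\delta)>0$, and the two crucial assertions $H'''(0)<0$ and $H''(0)>0$ (your weak inequality $D''(\delta)\le 0$ in place of the paper's strict one does no harm to the chase). Your treatment of $H'''(0)<0$ is essentially the paper's argument in disguise: your nested-Rolle points $r_1<r_2<\xi$ are exactly the roots $w_2<w_1<w_0$ of $F'$, $F''$, $F'''$ from \cref{rk:w}, and placing $\hatl$ to the left of $r_1$ via concavity of $g$ is the paper's ``$F'$ is positive only to the left of $w_2$'' step. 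Where you genuinely diverge is the crux $H''(0)>0$, i.e., $g_2>f''(\hatl)$. The paper sandwiches $g'(w_2)-g'(0)$ between a lower bound obtained from $f'''>g_3$ on $[0,w_2]$ and an upper bound obtained from the explicit quadratic for $g'$ together with $w_2<\delta\le -g_2/g_3$, then cancels to extract $g_2>f''(\hatl)$. You instead use the exact identity $\int_{\hatl}^{\delta}f''(t)\,dt=f'(\delta)-f'(\hatl)=\delta\bigl(g_2+\tfrac12 g_3\delta\bigr)$, the chord (trapezoid) inequality for the concave function $f''$ on $[\hatl,\delta]$, and the sign fact $g_2+\tfrac12 g_3\delta=f''(\delta)-\tfrac12 g_3\delta<0$ to reach a contradiction from $f''(\hatl)\ge g_2$. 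This is a clean and arguably more transparent route: it bypasses the auxiliary root $w_2$ entirely in this step and replaces two one-sided Taylor-type estimates with a single geometric fact about concave functions; the paper's version, in exchange, reuses the $w_0,w_1,w_2$ machinery it has already set up. One small point of care: you take the existence of $\hatl=(f')^{-1}(g_1)$ for granted from the statement and only verify $0<\hatl<\delta$; the paper additionally derives existence and uniqueness from $\lim_{w\to0^+}f'(w)>g_1>f'(\delta)$ (property \ref{p1} of \cref{prop:thm1}, which is available to you as well), and it would be worth one sentence to include this.
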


%\begin{theorem}
%\label{thm:fg_gen}
%Let $f$ be a function that satisfies  $f(0)=0$, $f$ is increasing and \textcolor{blue}{strictly} concave on $[0,+\infty)$, $f$ is thrice differentiable on $(0,+\infty)$. Moreover, suppose
%\smallskip
%\begin{enumerate}[label=(\arabic*),leftmargin=5\parindent,itemsep=1ex]
%\labitem{{\it (1)}}{cond1*} $f'''$ is decreasing on $(0,2\delta)$;
%\labitem{{\it (2)}}{cond2}  $f'''(w) \geq 0$, \text{ for } $w \in (0, 2\delta)$.
%\end{enumerate}
%Then
%\[
%h(w):=f(w+\hat{\lambda})-f(\hat{\lambda})\le g(w),  \text{ for } w \in [0,+\infty),
%\]
%where the shift constant $\hatl$ is chosen so that $h'(0)=g_1$; i.e.,
%$\hatl=(f')^{-1}(g_1)$.
%\end{theorem}
\begin{proof}
With condition \ref{cond1*}, $f$ satisfies the hypotheses of \cref{prop:thm1}, so we have all the properties of \cref{prop:thm1}. First, we consider the existence and uniqueness of $\hatl$.  Condition \ref{cond2} and property \ref{p4} imply that $g_3>f'''(\delta)\ge0$, and so $g_1-f'(\delta)=\frac{1}{2}g_3\delta^2-\delta f''(\delta) > 0$.  Therefore, $\lim_{w\to0^+} f'(w) > g_1 > f'(\delta)$, and because  $f'(w)$ is decreasing, there exists exactly one $\hatl$ in $(0,\delta)$ such that $f'(\hatl) = g_1$.

Now consider the function $H:=g-h$, which has
\begin{align*}
H(w) &= g_1 w + \frac{1}{2}g_2 w^2 + \frac{1}{6}g_3 w^3 -f(w+\hat{\lambda})+f(\hat{\lambda});\\
H'(w) &= g_1 + g_2 w + \frac{1}{2}g_3 w^2-f'(w+\hat{\lambda});\\
H''(w) &= g_2 + g_3w - f''(w+\hat{\lambda});\\
H'''(w) &= g_3 - f'''(w+\hat{\lambda}),
\end{align*}
where the coefficients of the associated function $g$ are as usual (repeated here for convenience):
\begin{align*}
g_1&=\frac{3f(\delta)}{\delta}-2f'(\delta)+\frac{\delta f''(\delta)}{2};\\
g_2&=-\frac{6f(\delta)}{\delta^2}+\frac{6f'(\delta)}{\delta}-2f''(\delta);\\
g_3&=\frac{6f(\delta)}{\delta^3}-\frac{6f'(\delta)}{\delta^2}+\frac{3f''(\delta)}{\delta}.
\end{align*}

It is now straightforward to verify that $H(0)=H'(0)=0$, $H(\delta)=f(\delta)-h(\delta)\ge0$, and $H'(\delta)=f'(\delta)-f'(\delta+\hatl)>0$.

Noting that $0 < \hatl < \delta$, by condition \ref{cond2},
\[
H''(\delta) =f''(\delta)-f''(\delta+\hatl)<0,
\]
by condition \ref{cond1*},
\[
H''' \text{ is increasing on } (0,\delta],
\]
and by condition \ref{cond1*} and property \ref{p4} together,
\[
H'''(\delta) =g_3-f'''(\delta+\hatl)>f'''(\delta)-f'''(\delta+\hatl)>0.
\]
Finally, we assert that $H'''(0)<0$ and $H''(0)>0$, which we prove below.
%Noting that $0 < \hatl < \delta$, condition \ref{cond1*}, \ref{cond2} and property \ref{p4} imply that

%\begin{align*}
%&H''(\delta) =f''(\delta)-f''(\delta+\hatl)<0, &~(\text{condition \ref{cond2}})\\
%&H''' \text{ is increasing on } (0,\delta], &~(\text{condition \ref{cond1*}})\\
%&H'''(\delta) =g_3-f'''(\delta+\hatl)>f'''(\delta)-f'''(\delta+\hatl)>0 &~(\text{condition \ref{cond1*} and property \ref{p4}}).
%\end{align*}
%\begin{gather*}
%H'(\delta)=g'(\delta)-h'(\delta)=f'(\delta)-f'(\delta+\hatl)>0,\\
%H'''(\delta)=g'''(\delta)-h'''(\delta)=6A-f'''(\delta+\hatl)>f'''(\delta)-f'''(\delta+\hatl)>0.
%\end{gather*}
%$H'''(w)$ is increasing on $(0,\delta]$ because $f'''(w+\hatl)$ is decreasing on $[0,\delta]$.  Because $f'''(w)\ge0$ on $(0,2\delta]\supset [\delta,\delta+\hatl]$,
%\begin{displaymath}
%H''(\delta)=g''(\delta)-h''(\delta)=f''(\delta)-f''(\delta+\hatl)<0.
%\end{displaymath}

Because $H'''$ is increasing on $[0,\delta]$ with $H'''(0)<0$ and $H'''(\delta)>0$, we see that $H''(w)$ is first decreasing and then increasing on $[0,\delta]$. Because $H''(0)>0$ and $H''(\delta)<0$, there exists exactly one zero of $H''$ on $(0,\delta)$, which we label $v_1$. Thus $H'(w)$ is increasing on $[0,v_1]$ and decreasing on $[v_1,\delta]$. Because $H'(0)=0$ and $H'(\delta)>0$, we see that $H(w)$ is increasing on $[0,\delta]$, and so for $w \in [0,\delta]$, $H(w)\ge H(0)=0$; i.e., $h(w) \leq g(w)$, for $w \in I$.

%Because $f'''(w)\ge0$ on $[0,2\delta]\supset [\delta,\delta+\hatl]$,
%\begin{displaymath}
%H''(\delta)=g''(\delta)-h''(\delta)=f''(\delta)-f''(\delta+\hatl)<0.
%\end{displaymath}
%and function $f$ satisfies the conditions in \cref{thm:1}, we can follow the proof in \cref{thm:1} to get $w_0, w_1, w_2$.

Now we turn our attention to proving that $H'''(0) < 0$ and $H''(0)>0$.  As the conditions of this theorem are a restriction of those of \cref{thm:1}, we can find the roots of the derivatives of the function $F := f-g$, $0 < w_2 < w_1 < w_0 < \delta$, where $w_0$ is the root of $F'''$, $w_1$ is the root of $F''$, and $w_2$ is the root of $F'$ as in \cref{rk:w}.

From \cref{rk:w}, $F'''$ is decreasing on $(0,\delta)$.  Therefore, to prove that $H'''(0)=g_3-f'''(\hatl)=g'''(\hatl)-f'''(\hatl) < 0$, it suffices to show that $\hatl < w_0$.  Function $f$ satisfies condition \eqref{eq:Tdelta} of \cref{thm:ic}, so $g$ is concave on $(0,\delta]$, and $f'(\hatl)-g'(\hatl)=g'(0)-g'(\hatl)>0$.  Because $F'$ is positive only to the left of $w_2$, we have $\hatl < w_2~ (<w_0)$.

To prove that $H''(0)=g_2-f''(\hatl) > 0$, we demonstrate that $g_2 > f''(\hatl)$, which we accomplish via an inequality that arises as lower and upper bounds on $g'(w_2)-g'(0)$. For the lower bound, because $F'''(w) = f'''(w)-g_3 > 0$ on $[0,w_2]\subset[0,w_0)$, we have
\[
f''(w)> f''(\hatl)+g_3(w-\hatl), \text{ for } w\in[\hatl,w_2].
\]
Therefore, the slope of the secant to $f''$ between the points at $w = \hatl$ and $w = w_2$ is at least $g_3$; i.e.,
\[
g'(w_2)-g'(0)=f'(w_2)-f'(\hatl)> \frac{1}{2}g_3(w_2-\hatl)^2 + f''(\hatl)(w_2-\hatl).
\]

For the upper bound on $g'(w_2) - g'(0)$, we require two observations.  First, by condition \ref{cond1*} and property \ref{p4}, we have $g_3 > f'''(\delta) \geq 0$.  Second, applying
$
g_2+g_3\delta = f''(\delta)\leq 0,
$
we have
$
w_2 < \delta \leq-g_2/g_3.
$
Now we can obtain the upper bound
\begin{align*}
g'(w_2)-g'(0)
&=\frac{1}{2}g_3w_2^2+g_2w_2 \\
&\le \frac{1}{2}g_3(w_2-\hatl)^2+g_2(w_2-\hatl),
\end{align*}
because this inequality is equivalent to
\[
0 \leq -g_3w_2  -g_2 + g_3\hatl/2,
\]
which we verify by applying $g_3>0$ and $w_2 \leq - g_2/g_3$.

Combining these bounds, we have
\[
\frac{1}{2}g_3(w_2-\hatl)^2 + f''(\hatl)(w_2-\hatl)~ <~ g'(w_2)-g'(0) ~\leq~ \frac{1}{2}g_3(w_2-\hatl)^2+g_2(w_2-\hatl),
\]
which reduces to the desired $g_2 > f''(\hatl)$.
%On the other hand, because $w_2 \le -\frac{B}{\frac{1}{2}g'''(0)}$ and $A>0$, $g'(w_2)-g'(0)=\frac{1}{2}g'''(0)w_2^2+2Bw_2\le \frac{1}{2}g'''(0)(w_2-\hatl)^2+2B(w_2-\hatl)$. Thus we have $2B> f''(\hatl)$, i.e. $H''(0)>0$.
\end{proof}

The following corollary demonstrates that \cref{thm:fg_gen} generalizes the result in \cite{lee2016virtuous}, which states that our smoothing $g$ `fairly dominates' the shift smoothing $h$ for root functions of the form $f(w) = w^{1/q}$, with integer $2 \leq  q \leq 10,000$.

\begin{corollary}
Let $f(w): = w^p$, for some $0<p<1$.
Then $h(w)\le g(w)$ for all $w\in [0,+\infty)$.
\end{corollary}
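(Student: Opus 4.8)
The plan is to verify that the root function $f(w) := w^p$, for $0 < p < 1$, satisfies all hypotheses of \cref{thm:fg_gen}, after which the conclusion $h(w) \le g(w)$ follows immediately. So the entire task reduces to hypothesis-checking on an explicit function. From \cref{cor:incCwp} we already know the relevant derivatives: $f'(w) = pw^{p-1}$, $f''(w) = p(p-1)w^{p-2}$, and $f'''(w) = p(p-1)(p-2)w^{p-3}$, all on $(0,+\infty)$, so $U = +\infty$ and the requirement $U \geq \delta/2 > 0$ is automatic for any $\delta > 0$.

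First I would record the basic structural properties: $f(0) = 0$, and since $0 < p < 1$ we have $f'(w) = pw^{p-1} > 0$ (so $f$ is increasing) and $f''(w) = p(p-1)w^{p-2} < 0$ (so $f$ is strictly concave); $f$ is continuous on $[0,+\infty)$ and thrice differentiable on $(0,+\infty)$. These cover the first two bullet hypotheses and the minimal $\delta$-smoothing requirements.

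Next I would check the two numbered conditions \ref{cond1*} and \ref{cond2}, which are the crux. For \ref{cond2}, note $f'''(w) = p(p-1)(p-2)w^{p-3}$: the factor $p > 0$, $(p-1) < 0$, and $(p-2) < 0$, so the product $p(p-1)(p-2) > 0$ and hence $f'''(w) > 0 \geq 0$ on all of $(0,+\infty)$, in particular on $(0,2\delta)$. For \ref{cond1*}, I would appeal to the fourth derivative $f^{(4)}(w) = p(p-1)(p-2)(p-3)w^{p-4}$, already computed in the proof of \cref{cor:incCwp}: here $p(p-1)(p-2) > 0$ as above and $(p-3) < 0$, so $f^{(4)}(w) < 0$ on $(0,+\infty)$, which shows $f'''$ is decreasing on $(0,+\infty)$, in particular on $(0,2\delta)$.

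Having verified every hypothesis, I would conclude directly by \cref{thm:fg_gen} that $h(w) = f(w+\hatl) - f(\hatl) \le g(w)$ for $w$ in the domain of $f$, i.e.\ for all $w \in [0,+\infty)$. The only genuinely non-routine point is recognizing the correct sign pattern of the triple and quadruple products of $(p-k)$ factors; everything else is a transcription from \cref{cor:incCwp}, so I do not anticipate a real obstacle—the work is entirely in the bookkeeping of signs, which is clean because the sign of $f^{(n)}$ alternates in a transparent way for $0 < p < 1$.
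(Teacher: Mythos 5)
Your proposal is correct and follows exactly the paper's route: the paper likewise proves this corollary by citing the derivatives computed earlier (for the root function) and verifying that they satisfy conditions \ref{cond1*} and \ref{cond2} of \cref{thm:fg_gen}, then invoking that theorem. You simply spell out the sign bookkeeping ($p(p-1)(p-2)>0$ and $p(p-1)(p-2)(p-3)<0$) that the paper leaves as ``easy to see.''
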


\begin{proof}
According to the derivatives of $f$ in \cref{cor:wp}, it is easy to see that $f(w)$ satisfies the conditions \ref{cond1*} and \ref{cond2} of \cref{thm:fg_gen}. Therefore the conclusion follows.
\end{proof}

Finally, we note that the \cref{thm:fg_gen} also applies to the non-root function that we have explored throughout.

\begin{example}
Let $f(w) = {\rm ArcSinh}(\sqrt{w})=\log\left(\sqrt{w}+\sqrt{1+w}\right)$, for $w \geq 0$. Then $h(w)\le g(w)$ for all $w \geq 0$.
\end{example}

\section{Conclusions}
\label{sec:conclusions}
It may seem like a challenge to automatically identify and apply the techniques
that we have presented. But in the context of global optimization aimed at factorable
formulations, the algorithm/software designer has a limited number of library functions
to analyze. Furthermore, even in a fully extensible system, we could automatically apply major parts of
our ideas. For example, once a univariate function $f$ has been identified to satisfy $f(0)=0$, $f$ is increasing and concave on say $[0,+\infty)$, $f$ is twice differentiable on
all of $(0,+\infty)$, but $f'(0)$ undefined or intolerably large, then the rest of our methodology (i.e., calculating $g$ and identifying its properties) can be done
automatically. A start has been made on making accommodations for our methodology in \verb;SCIP;.
Hopefully we will see more advances in such a direction, contributing to
the overall goal of making MINLO software more robust and useful.

From a mathematical point of view, still aiming at potential impact on
MINLO software, we could look at functions $f$ with domain being a 2-variable
polyhedron $P$, where $f$ is nice and smooth on the interior of $P$,
but not differentiable on part of the boundary of $P$.

%\appendix
%\section{An example appendix}

\section*{Acknowledgments}
The authors thank an anonymous reviewer who suggested the
problem solved in \S\ref{fgmonotone}.
J. Lee was supported in part by ONR grant N00014-17-1-2296.
Additionally, part of this work was done while J. Lee was visiting the Simons Institute for the Theory of Computing. It was partially supported by the DIMACS/Simons Collaboration on Bridging Continuous and Discrete Optimization through NSF grant \#CCF-1740425.

\bibliographystyle{alpha}
\bibliography{smooth}
\end{document}